\newtheorem{theorem}[subsection]{Theorem}
\newtheorem{lemma}[subsection]{Lemma}
\newtheorem{proposition}[subsection]{Proposition}
\newtheorem{corollary}[subsection]{Corollary}
\newtheorem{remark}[subsection]{Remark}
\newtheorem*{remark*}{Remark}
\newtheorem{definition}[subsection]{Definition}
\newcommand\Mman{M}
\newcommand\Xman{X}
\newcommand\RRR{\mathbb{R}}
\newcommand\ZZZ{\mathbb{Z}}
\newcommand\FFF{\mathcal{F}}
\newcommand\id{\mathrm{id}}
\newcommand\eps{\varepsilon}
\newcommand\Diff{\mathcal{D}}
\newcommand\Stab{\mathcal{S}}
\newcommand\Orbit{\mathcal{O}}
\newcommand\DiffId{\mathcal{D}_{\id}}
\newcommand\StabId{\mathcal{S}_{\id}}
\newcommand\DiffM{\Diff(M)}
\newcommand\Stabf{\Stab(f)}
\newcommand\Orbf{\Orbit(f)}
\newcommand\Orbff{\Orbit_{f}(f)}
\newcommand\DiffIdM{\DiffId(M)}
\newcommand\StabIdf{\StabId(f)}
\newcommand{\fKRGraph}{\Gamma(f)}
\newcommand\flow{\mathbf{F}}
\newcommand\Torus{T^2}
\newcommand\torInDiff{\xi}
\newcommand\curveMeridian{C}
\newcommand\curveParallel{C'}
\newcommand\flowMeridian{\mathbf{M}}
\newcommand\flowParallel{\mathbf{L}}
\newcommand\cycleMeridian{\mu}
\newcommand\cycleParallel{\lambda}
\newcommand\subgroupMeridian{\mathcal{M}}
\newcommand\subgroupParallel{\mathcal{L}}
\newcommand\DiffIdT{\DiffId(\Torus)}
\newcommand\DiffTC{\Diff(\Torus,\curveMeridian)}
\newcommand\DiffIdTC{\DiffId(\Torus,\curveMeridian)}
\newcommand\StabPrfC{\Stab'(f,\curveMeridian)}
\newcommand\StabPrf{\Stab'(f)}
\newcommand\StabfX{\Stab(f,\Xman)}
\newcommand\StabIdfX{\StabId(f,\Xman)}
\newcommand\StabPrfX{\Stab'(f,\Xman)}
\newcommand\DiffMX{\Diff(M,\Xman)}
\newcommand\DiffIdMX{\DiffId(M,\Xman)}
\newcommand\OrbfX{\Orbit(f,\Xman)}
\newcommand\OrbffX{\Orbit_{f}(f,\Xman)}
\newcommand\SerreFibr{p}
\newcommand\Morse{\mathrm{Morse}}
\newcommand\eval{\varphi}
\newcommand\unwind{\zeta}
\newcommand\kerjo{\mathcal{K}}
\newcommand\incmap{i}
\newcommand\jZ{\incmap_0}
\newcommand\jO{\incmap_1}
\newcommand\fibr{j}
\newcommand\qhom{q} 
\newcommand\DT{\Diff^{\id}}
\newcommand\DTC{\Diff^{\id}_{\curveMeridian}}
\newcommand\Of{\Orbit}
\newcommand\OfC{\Orbit_{\curveMeridian}}
\newcommand\Sf{\Stab}
\newcommand\SfC{\Stab_{\curveMeridian}}
\newcommand\Sidf{\Stab^{\id}}
\newcommand\SidfC{\Stab^{\id}_{\curveMeridian}}
\begin{document}

\title[Orbits of smooth functions on $2$-torus and their homotopy types]
	{Orbits of smooth functions on $2$-torus and their homotopy types}

\author{Sergiy Maksymenko}
\address{Topology department, Institute of Mathematics of NAS of Ukraine, Tereshchenkivska str. 3, Kyiv, 01601, Ukraine}
\curraddr{}
\email{maks@imath.kiev.ua}

\author{Bohdan Feshchenko}
\address{Topology department, Institute of Mathematics of NAS of Ukraine, Tereshchenkivska str. 3, Kyiv, 01601, Ukraine}
\email{feshchenkobogdan@imath.kiev.ua}

\subjclass[2000]{57S05, 57R45, 37C05}
\date{27/08/2014}
\keywords{Diffeomorphism, Morse function, homotopy type}

\begin{abstract}

Let $f:T^2\to\mathbb{R}$ be a Morse function on $2$-torus $T^2$ such that its Kronrod-Reeb graph $\Gamma(f)$ has exactly one cycle, i.e. it is homotopy equivalent to $S^1$.
Under some additional conditions we describe a homotopy type of the orbit of $f$ with respect to the action of the group of diffeomorphism of $T^2$.

This result holds  for a larger class of smooth functions $f:T^2\to\mathbb{R}$ having the following property: for every critical point $z$ of $f$ the germ of $f$ at $z$ is smoothly equivalent to a homogeneous polynomial $\mathbb{R}^2\to\mathbb{R}$ without multiple factors.

\end{abstract}
\subjclass{57S05, 57R45, 37C05} 

\keywords{Diffeomorphism, Morse function, homotopy type} 

\maketitle

\section{Introduction}
Let $\Mman$ be a smooth oriented surface. 
For a closed (possibly empty) subset $\Xman \subset M$ denote by $\DiffMX$ the group of diffeomorphisms of $\Mman$ fixed on $\Xman$.
This group naturally acts from the right on the space of smooth functions $C^{\infty}(\Mman)$ by following rule:
if $h\in\DiffMX$ and $f\in C^{\infty}(\Mman)$ then the result of the action of $h$ on $f$ is the composition map
\begin{equation}\label{main-act}
f\circ h : M\xrightarrow{~~h~~} M \xrightarrow{~~f~~}\RRR.
\end{equation}
For $f\in C^{\infty}(\Mman)$ let 
\begin{align*}
\StabfX &= \{f\in\DiffMX\,|\,f\circ h = f\}, &
\OrbfX &= \{f\circ h\,|\,h\in\DiffMX\}.
\end{align*}
be respectively the \emph{stabilizer} and the \emph{orbit} of $f$ under the action~\eqref{main-act}.

Endow on $\DiffMX$, $C^{\infty}(\Mman)$ and their subspaces $\StabfX$ and $\OrbfX$ with the corresponding Whitney $C^{\infty}$-topologies.
Let also $\StabIdfX$ be the path component of the identity map $\id_{M}$ in $\StabfX$, $\DiffIdMX$ be the path component of $\id_{M}$ in $\DiffMX$, and $\OrbffX$ be the path component of $f$ in $\OrbfX$.
If $\Xman=\varnothing$ then we omit it from notation and write $\DiffM=\Diff(M,\varnothing)$, $\Stabf=\Stab(f,\varnothing)$, $\Orbf=\Orbit(f,\varnothing)$, and so on.

We will assume that all the homotopy groups of $\OrbfX$ will have $f$ as a base point, and all homotopy groups of the groups of diffeomorphisms and the corresponding stabilizers of $f$ are based at $\id_{\Mman}$.
For instance $\pi_k(\Orbit(f, \Xman))$ will always mean $\pi_k\bigl( \Orbit(f, \Xman), f \bigr)$.
Notice that the latter group is also isomorphic with $\pi_k\bigl( \Orbit_{f}(f, \Xman), f \bigr)$.

Since $\DiffMX$ and $\StabfX$ are topological groups, it follows that the homotopy sets $\pi_0\DiffMX$, $\pi_0\StabfX$, and $\pi_1\bigl(\DiffMX,\StabfX\bigr)$ have natural groups structures such that 
\[
\pi_0 \DiffMX \ \cong \ \DiffMX/\DiffIdMX,
\qquad
\pi_0 \StabfX \ \cong \ \StabfX/\StabIdfX,
\]
and in the following part of exact sequence of homotopy groups of the pair $\bigl(\DiffMX,\StabfX\bigr)$
\begin{equation}\label{equ:exact_seq_of_pair_DMX_SfX}
\cdots \to \pi_1\DiffMX \xrightarrow{~~\qhom~~} \pi_1\bigl(\DiffMX,\StabfX\bigr) 
 \xrightarrow{~~\partial~~} \pi_0 \StabfX \xrightarrow{~~\incmap~~} \pi_0 \DiffMX
\end{equation}
all maps are homomorphisms.

Moreover, $\qhom\bigl(\pi_1\DiffMX \bigr)$ is contained in the center of $\pi_1\bigl(\DiffMX,\StabfX\bigr)$.

\medskip

Recall that two smooth germs $f, g: (\RRR^2,0)\longrightarrow (\RRR,0)$ are said to be \emph{smothly equivalent} if there exist germs of diffeomorphisms $h:(\RRR^2,0)\longrightarrow(\RRR^2,0)$ and $\phi:(\RRR,0)\to (\RRR,0)$ such that $\phi\circ g = f\circ h$.

\begin{definition}
Denote by $\mathcal{F}(M)$ a subset in $C^{\infty}(\Mman)$ which consists of functions $f$ having the following two properties:
\begin{itemize}
\item
$f$ takes a constant value at each connected components of $\partial\Mman$, and all critical points of $f$ are contained in the interior of $M$;

\item for each critical point $z$ of $f$ the germ of $f$ at $z$ is smoothly equivalent to a {\bfseries homogeneous polynomial $f_z:\RRR^2\to\RRR$ without multiple factors}.
\end{itemize}
\end{definition}

Suppose a smooth germ $f: (\RRR^2,0)\longrightarrow (\RRR,0)$ has a critical point $0\in\RRR^2$.
This point is called \emph{non-degenerate} if $f$ is smoothly equivalent to a homogeneous polynomial of the form $\pm x^2 \pm y^2$.

Denote by $\Morse(M)$ the subset of $C^{\infty}(\Mman)$ consisting of Morse functions, that is functions having only \textit{non-degenerate} critical points.
It is well known that $\Morse(M)$ is open and everywhere dense in $C^{\infty}(\Mman)$.
Since $\pm x^2 \pm y^2$ has no multiple factors, we get the following inclusion $\Morse(M) \ \subset \ \FFF(\Mman)$.

\begin{remark}\rm
A homogeneous polynomial $f:\RRR^2\to\RRR$ has critical points only if $\deg f \geq2$, and in this case the origin is always a critical point of $f$.
If $f$ has no multiple factors, then the origin $0$ is a unique critical point.
Moreover, $0$ is non-degenerate $\deg f = 2$, and degenerate for $\deg f \geq 3$, see~\cite[\S7]{Maksymenko:MFAT:2009}.
\end{remark}

Now let $f\in \FFF(\Mman)$ and $c\in\RRR$.
A connected component $\curveMeridian$ of the level set $f^{-1}(c)$ is said to be \emph{critical} if $\curveMeridian$ contains at least one critical point of $f$.
Otherwise $\curveMeridian$ is called \emph{regular}.
Consider a partition of $\Mman$ into connected component of level sets of $f$.
It is well known that the corresponding factor-space $\fKRGraph$ has a structure of a finite one-dimensional $CW$-complex and is called \emph{Kronrod-Reeb graph} or simply KR-graph of the function $f$.
In particular, the vertices of $\fKRGraph$ are critical components of level sets of $f$.

It is usually said that this graph was introduced by G.~Reeb in~\cite{Reeb:ASI:1952}, however in was used before by A.~S.~Kronrod in~\cite{Kronrod:UMN:1950} for studying functions on surfaces.
Applications of $\fKRGraph$ to study Morse functions on surfaces are given e.g. in~\cite{BolsinovFomenko:1997, Kulinich:MFAT:1998, Kudryavtseva:MatSb:1999, Sharko:UMZ:2003, Sharko:MFAT:2006, MasumotoSaeki:KJM:2011}.

\medskip

In a series of papers the first author calculated homotopy types of spaces $\Stabf$ and $\Orbf$ for all $f \in \FFF(\Mman)$.
These results are summarized in Theorem~\ref{th:fibration_DMX_Of} below.

Denote also
\begin{align*}
\StabPrf &= \Stabf \cap \DiffIdM,
&
\StabPrfX &= \Stabf \cap \DiffIdMX.
\end{align*}
Thus $\StabPrfX$ consists of diffeomorphisms $h$ preserving $f$, fixed on $\Xman$ and isotopic to $\id_{M}$ relatively $\Xman$, though the isotopy between $h$ and $\id_{M}$ is not required to be $f$-preserving.

\begin{theorem}\label{th:fibration_DMX_Of}
{\rm \cite{Maksymenko:AGAG:2006, Maksymenko:ProcIM:ENG:2010, Maksymenko:UMZ:ENG:2012}.}
Let $f\in\FFF(\Mman)$ and $\Xman$ be a finite (possibly empty) union of regular components of certain level sets of function $f$.
Then the following statements hold true.

{\rm (1)}
$\Orbit_{f}(f, \Xman) = \Orbit_{f}(f,\Xman\cup\partial\Mman)$, and so 
\[
\pi_k \Orbit(f, \Xman) \ \cong \ \pi_k \Orbit(f, \Xman \cup\partial\Mman), \qquad k\geq1.
\]

{\rm (2)}
The following map
\[
 \SerreFibr:\DiffMX \longrightarrow \OrbfX, \qquad \SerreFibr(h) = f \circ h.
\]
is a Serre fibration with fiber $\StabfX$, i.e.\! it has homotopy lifting property for CW-complexes.
This implies that 
\begin{itemize}
\item[\rm(a)]
$\SerreFibr(\DiffIdMX) = \OrbffX$;

\item[\rm(b)]
the restriction map 
\begin{equation}\label{equ:fibr_pX}
\SerreFibr|_{\DiffIdMX}:\DiffIdMX \longrightarrow \OrbffX
\end{equation}
is also a Serre fibration with fiber $\StabPrfX$;

\item[\rm(c)]
for each $k\geq0$ we have an isomorphism $\fibr_k: \pi_k \bigl( \DiffMX, \StabfX \bigr) \longrightarrow \pi_k \OrbfX$ defined by $\fibr_k[\omega] = [f\circ\omega]$ for a continuous map $\omega:(I^k, \partial I^k, 0) \to \bigl( \DiffM, \Stabf, \id_{\Mman} \bigr)$, and making commutative the following diagram
\[
\xymatrix{ 
\cdots \ar[r] & \pi_k\DiffMX \ar[r]^-{\qhom} \ar[rd]_{p} & \pi_k\bigl(\DiffMX,\StabfX\bigr)\ar[d]^{\fibr_k}_{\cong} \ar[r]^-{\partial} & \pi_{k-1} \StabfX \ar[r] & \cdots \\
 & & \pi_k\OrbfX \ar[ru]_{\partial \circ \fibr_k^{-1}},
}
\]
see for example \cite[\S~4.1, Theorem~4.1]{Hatcher:AlgTop:2002}.
\end{itemize}

{\rm (3)}
Suppose either $f$ has a critical point which is not a {\bfseries nondegenerate local extremum} or $\Mman$ is a non-oriented surface.
Then $\StabIdf$ is contractible, $\pi_n\Orbf = \pi_n\Mman$ for $n\geq3$, $\pi_2\Orbf=0$, and for $\pi_1\Orbf$ we have the following short exact sequence of fibration $\SerreFibr$:
\begin{equation}\label{equ:pi1Of_exact_sequence}
 1 \longrightarrow \pi_1\DiffM \xrightarrow{~~\SerreFibr~~} \pi_1\Orbf \xrightarrow{~~\partial\circ \fibr^{-1}_1~~} \pi_0\StabPrf\longrightarrow 1.
\end{equation}
Moreover, $\SerreFibr\bigl(\pi_1\DiffM\bigr)$ is contained in the center of $\pi_1\Orbf$.

{\rm (4)}
Suppose either $\chi(\Mman)<0$ or $\Xman\not=\varnothing$.
Then $\DiffIdMX$ and $\StabIdfX$ are contractible, whence from the exact sequence of homotopy groups of the fibration~\eqref{equ:fibr_pX} we get $\pi_k\OrbfX=0$ for $k\geq2$, and that the boundary map 
\[ \partial\circ \fibr^{-1}_1: \pi_1\OrbfX \ \longrightarrow \ \pi_0 \StabPrfX \]
is an isomorphism.
\end{theorem}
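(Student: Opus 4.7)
The theorem consolidates four statements of rather different character; my plan is to combine local analysis near critical points (where $f \in \FFF(\Mman)$ provides a rigid homogeneous-polynomial normal form), global flow constructions tangent to level sets, and classical facts about the diffeomorphism groups of surfaces.

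For part (1), I would use that $\partial \Mman$ consists of regular connected components of level sets of $f$ (by definition, $f$ is constant on each boundary component and critical points lie in the interior). Given a path $f_t = f \circ h_t$ in $\OrbfX$ starting at $f$ with $h_t \in \DiffIdMX$, I would construct a homotopic path $h'_t \in \DiffIdMX$ additionally fixed on $\partial \Mman$ by composing $h_t$ with a continuous family of diffeomorphisms supported in a collar of $\partial \Mman$ that corrects $h_t|_{\partial \Mman}$ back to the identity. This works because $\Diff(\partial \Mman \times [0,\eps), \partial \Mman \times \{0\})$ deformation retracts onto $\{\id\}$, and near each boundary component $f$ is a submersion with one-dimensional level sets, so the correction can be chosen to preserve $f\circ h_t$ up to homotopy in $\OrbfX$.

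For part (2), the Serre fibration property reduces to building local sections of $\SerreFibr$. Given $g$ close to $f$ in $\OrbfX$, I would produce $h_g \in \DiffMX$ with $f \circ h_g = g$ continuously as follows: on the complement of a neighborhood of the critical points, $f$ is a submersion and perturbed level sets can be matched by flowing along a fixed gradient-like vector field; near each critical point $z$, the normal form of $f_z$ (a homogeneous polynomial without multiple factors) admits a canonical straightening that sends level curves of $g$ to those of $f$ via the radial scaling intrinsic to homogeneous polynomials. Patching with a partition of unity yields a section continuous in the Whitney topology. Consequences (a), (b), (c) then follow formally from fibration theory applied to $\SerreFibr$ and its restriction~\eqref{equ:fibr_pX}.

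The heart of the theorem is the contractibility assertions in (3) and (4). For $\DiffIdMX$ in case (4), I would invoke the classical Earle--Eells/Gramain theorem: when $\chi(\Mman)<0$, or when a nonempty closed subset is pointwise fixed, the identity component of the diffeomorphism group is contractible. For $\StabIdf$ in case (3), my plan is to construct a smooth $f$-preserving flow $\flow$ on $\Mman$ (essentially a Hamiltonian-type flow tangent to level sets, built from smooth functions on the Kronrod--Reeb graph $\fKRGraph$) together with an explicit understanding of the finite group of $f$-preserving germs at each critical point, and then to canonically deform any $h \in \StabIdf$ to $\id_{\Mman}$ by rectifying it with respect to $\flow$ on regular pieces and normalizing it near critical points using their homogeneous normal form. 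The main obstacle is to carry out this rectification continuously in $h$ and to verify that no topological obstruction arises at the critical points, since $f$-preserving germs at a homogeneous-polynomial singularity form a nontrivial group which must be described explicitly; this is exactly where the hypothesis $f \in \FFF(\Mman)$, guaranteeing that each $f_z$ has no multiple factors, is indispensable. Once these contractibility statements are established, the remaining claims in~\eqref{equ:pi1Of_exact_sequence}, the isomorphism $\partial \circ \fibr_1^{-1}$, and the centrality of $\SerreFibr(\pi_1 \DiffM)$ in $\pi_1\Orbf$ all follow from the long exact sequence of~\eqref{equ:fibr_pX} together with the fact, already recorded before the theorem, that $\qhom(\pi_1\DiffMX)$ lies in the center of $\pi_1(\DiffMX,\StabfX)$.
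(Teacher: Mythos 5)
First, note that the paper does not prove this theorem: it is stated with the citation \cite{Maksymenko:AGAG:2006, Maksymenko:ProcIM:ENG:2010, Maksymenko:UMZ:ENG:2012} precisely as a summary of results established elsewhere, so there is no internal proof to compare your attempt against. Judged against the arguments in those references, your overall architecture is the right one: a collar correction for (1), local analysis of $\SerreFibr$ for (2), the Earle--Eells/Gramain input for $\DiffIdMX$, and contraction of $\StabIdf$ by rectifying along an $f$-preserving flow for (3)--(4). Indeed, the central mechanism you gesture at is recorded in the present paper as Lemma~\ref{lm:shift_functions}: every $h\in\StabIdf$ equals $\flow_{\alpha}$ for a unique smooth shift function $\alpha$, and the family $\{\flow_{t\alpha}\}_{t\in I}$ contracts it to $\id_{\Mman}$.

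As a proof, however, the sketch has genuine gaps exactly where the cited papers do the real work. In (2), diffeomorphisms cannot be ``patched with a partition of unity'': a convex combination of locally defined sections of $\SerreFibr$ is not a diffeomorphism, so any gluing must happen at the level of shift functions or vector fields, and establishing that the resulting section is continuous in the Whitney topology --- and that the ``radial straightening'' extends smoothly across a \emph{degenerate} homogeneous singularity --- is the substance of the fibration theorem, not a routine patching step. In (3)--(4), contractibility of $\StabIdf$ is not obtained by deforming each $h$ separately; one needs the assignment $h\mapsto\alpha$ to be a well-defined, continuous map onto a convex subset of $C^{\infty}(\Mman)$, and both the uniqueness of $\alpha$ and the absence of jumps at critical points rest on the no-multiple-factors hypothesis. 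You correctly identify this as ``the main obstacle'' but leave it unresolved, which is to say the theorem is not proved. You also do not engage with the hypothesis of (3) that $f$ have a critical point which is not a nondegenerate local extremum: near such an extremum the level sets are circles and the shift function is defined only up to the period of the local rotation, which is precisely what can destroy uniqueness of $\alpha$ and hence the contraction; this is why the hypothesis appears. In short, the proposal is a reasonable roadmap to the literature rather than a proof, and for the purposes of this paper the correct ``proof'' is simply the citation.
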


Suppose $\Mman$ is orientable and differs from the sphere $S^2$ and the torus $\Torus$, and let $\Xman = \partial\Mman$.
Then $\Mman$ and $\Xman$ satisfy assumptions of (4) of Theorem~\ref{th:fibration_DMX_Of}.
Therefore from (1) of that theorem we get the following isomorphism
\[
\pi_1\Orbf \ \stackrel{(1)}{\cong} \ \pi_1\Orbit(f, \partial\Mman) \ \stackrel{(4)}{\cong} \ \pi_0 \Stab'(f,\partial\Mman).
\]
A possible structure of $\pi_0\Stab'(f,\partial\Mman)$ for this case is completely described in~\cite{Maksymenko:pi1Repr:2014}.

However when $\Mman$ is a sphere or a torus the situation is more complicated, as $\pi_1\DiffM \not=0$ and from the short exact sequence~\eqref{equ:pi1Of_exact_sequence} we get only that $\pi_1\Orbf$ is an extension of $\pi_0\StabPrf$ with $\pi_1\DiffM$.

\section{Main result}
Suppose $\Mman = \Torus$.
Then it can easily be shown that for each $f\in\FFF(\Torus)$ its KR-graph $\fKRGraph$ is either a tree or has exactly one simple cycle.
Moreover, $\pi_1\DT = \ZZZ^2$, see~\cite{EarleEells:BAMS:1967, Gramain:ASENS:1973}, and therefore the sequence \eqref{equ:pi1Of_exact_sequence} can be rewritten as follows:
\begin{equation}\label{equ:T2_pi1Of_exact_sequence}
1 \longrightarrow \ZZZ^2 \xrightarrow{~~p~~} \pi_1\Orbff \xrightarrow{~~\partial~~} \pi_0 \StabPrf \longrightarrow 1.
\end{equation}

In~\cite{MaksymenkoFeshchenko:UMZ:ENG:2014} the authors studied the case when $\fKRGraph$ is a tree and proved that under certain ``triviality of $\StabPrf$-action'' assumptions on $f$ the sequence~\eqref{equ:T2_pi1Of_exact_sequence} splits and we get an isomorphism
$\pi_1\Orbff\cong\pi_0\StabPrf\times \ZZZ^{2}$.

In the present paper we consider the situation when $\fKRGraph$ is has exactly one simple cycle $\Upsilon$ and under another ``triviality of $\StabPrf$-action'' assumption describe the homotopy type of $\Orbff$ in terms of $\pi_0\StabPrfC$ for some regular component of some level-set of $f$, see Definition~\ref{def:triv_act_S_on_C} and Theorem~\ref{th:main:pi1Of}.

First we mention the following two simple lemmas which are left for the reader.
\begin{lemma}\label{lm:characterization_KRGRaph_tree}
Let $f\in\FFF(\Torus)$.
Then the following conditions are equivalent:
\begin{itemize}
\item[\rm(i)] $\fKRGraph$ is a tree;
\item[\rm(ii)] every point $z\in\fKRGraph$ separates $\fKRGraph$;
\item[\rm(iii)] every connected component of every level set of $f$ separates $\Torus$.
\end{itemize}
\end{lemma}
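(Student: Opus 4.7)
The plan is to reduce the whole lemma to a single equivalence: for every connected component $\mathcal{K}$ of every level set of $f$, with image point $q := \fKRpr(\mathcal{K}) \in \fKRGraph$,
\[
\mathcal{K} \text{ separates } \Torus \iff q \text{ separates } \fKRGraph.
\]
Once this is established, (ii) $\Leftrightarrow$ (iii) is immediate, since the points of $\fKRGraph$ are in bijection with the connected level components of $f$, and (i) $\Leftrightarrow$ (ii) reduces to the standard characterization of a finite connected $1$-complex as a tree: the complex is a tree iff the interior of every edge consists of cut-points, equivalently iff it contains no cycle.

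I would first dispose of the easy direction of the key equivalence using continuity and surjectivity of $\fKRpr$. If $\fKRGraph \setminus \{q\} = U_1 \sqcup U_2$ with $U_i$ open, disjoint, and nonempty, then
\[
\Torus \setminus \mathcal{K} \;=\; \fKRpr^{-1}(U_1) \;\sqcup\; \fKRpr^{-1}(U_2)
\]
is a disjoint union of nonempty open sets, so $\mathcal{K}$ separates $\Torus$.

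For the converse, I would start from a decomposition $\Torus \setminus \mathcal{K} = V_1 \sqcup V_2$ into disjoint nonempty open sets. Every other level component $\mathcal{K}' \ne \mathcal{K}$ is connected and disjoint from $\mathcal{K}$, so it lies entirely in one $V_i$; this defines a function $\phi : \fKRGraph \setminus \{q\} \to \{1,2\}$. The key point will be to show $\phi$ is locally constant. For a point in the interior of an edge this is immediate, using the annular tubular neighborhood of the corresponding regular level component. For a vertex $q'$ corresponding to a critical level component $\mathcal{K}'$, one invokes the local model of $f$ at each critical point of $\mathcal{K}'$ (provided by the definition of $\FFF$: each critical germ is smoothly equivalent to a homogeneous polynomial without multiple factors) and glues these to a saturated neighborhood of $\mathcal{K}'$ whose nearby level components each lie entirely in a single $V_i$. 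Then $\phi$ is locally constant and non-constant, so $\fKRGraph \setminus \{q\}$ is disconnected, showing that $q$ separates $\fKRGraph$.

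The main obstacle I anticipate is precisely this local-constancy step at vertices of $\fKRGraph$, which requires controlling the local topology of $f$ near each critical point and keeping track, at a saddle or a higher-degeneracy singularity, of which ``sectors'' of the local neighborhood of $\mathcal{K}'$ map to which $V_i$. This is routine given the definition of $\FFF(\Torus)$, but it is the only step in the argument that is not purely formal.
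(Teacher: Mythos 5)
The paper offers no proof of this lemma (it is explicitly ``left for the reader''), so there is nothing to compare your argument against; I can only judge it on its own terms. Your reduction of everything to the single equivalence ``$\mathcal{K}$ separates $\Torus$ if and only if $\fKRpr(\mathcal{K})$ separates $\fKRGraph$'' is the right move, and your forward direction (pulling back a separation of $\fKRGraph\setminus\{q\}$ along the continuous surjection $\fKRpr$) is correct as written. But the step you single out as ``the only step that is not purely formal'' --- local constancy of $\phi$ at the vertices, which you propose to establish via the local models of the singularities --- is in fact purely formal, and the singularity models are not needed. If $\Torus\setminus\mathcal{K}=V_1\sqcup V_2$ with $V_i$ open and nonempty, then each $V_i$ is \emph{saturated} for $\fKRpr$: every level component other than $\mathcal{K}$ is connected and disjoint from $\mathcal{K}$, hence lies entirely in one $V_i$. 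Therefore $\fKRpr^{-1}\bigl(\fKRpr(V_i)\bigr)=V_i$, and since $\fKRpr$ is a quotient map, $\fKRpr(V_i)$ is open in $\fKRGraph$; thus $\fKRGraph\setminus\{q\}=\fKRpr(V_1)\sqcup\fKRpr(V_2)$ is a separation. This replaces your locally-constant-function argument entirely and closes the one place where your write-up is only a sketch.

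Separately, be aware that conditions (ii) and (iii), read literally, are never satisfied: every smooth function on the closed surface $\Torus$ attains a maximum and a minimum, the corresponding level components are single points (which do not separate $\Torus$), and the corresponding vertices of $\fKRGraph$ are leaves (which do not separate a tree). The lemma is correct only under the reading ``every point of every \emph{open edge} separates $\fKRGraph$'' and ``every \emph{regular} component separates $\Torus$'', consistent with the phrasing of Lemma~\ref{lm:characterization_of_cycle}. Your appeal to ``a finite connected $1$-complex is a tree iff the interior of every edge consists of cut-points'' is exactly the fact needed for that corrected reading, but it does not give (i)$\Rightarrow$(ii) as literally stated; you should make the restriction explicit rather than pass over it.
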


\begin{lemma}\label{lm:characterization_of_cycle}
Assume that $\fKRGraph$ has exactly one simple cycle $\Upsilon$.
Let also $z\in\fKRGraph$ be any point belonging to some open edge of $\fKRGraph$ and $\curveMeridian$ be the corresponding regular component of certain level set $f^{-1}(c)$ of $f$.
Then the following conditions are equivalent:
\begin{itemize}
\item[\rm(a)] $z\in\Upsilon$;
\item[\rm(b)] $z$ does not separate $\fKRGraph$;
\item[\rm(c)] $\curveMeridian$ does not separate $\Torus$.
\end{itemize}
\end{lemma}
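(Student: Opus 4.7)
The plan is to establish the two equivalences (a)$\iff$(b) and (b)$\iff$(c) separately, so that the lemma follows by transitivity. The first is a purely combinatorial statement about the unicyclic graph $\fKRGraph$; the second rests on the natural factor-projection $\fKRpr:\Torus\to\fKRGraph$.

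For (a)$\iff$(b) I would invoke the standard graph-theoretic fact that an interior point of an edge $e$ separates the ambient graph if and only if $e$ is a bridge. Since $\fKRGraph$ has a unique simple cycle $\Upsilon$, every edge lying on $\Upsilon$ is non-separating (removing an interior point leaves the cycle as a path, with the attached trees still glued to it), while every edge not lying on $\Upsilon$ is a bridge (otherwise it would participate in a second simple cycle). Therefore $z\in\Upsilon$ if and only if $z$ does not separate $\fKRGraph$.

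For (b)$\iff$(c) the essential observation is that $\fKRpr$ is continuous and surjective, its fibers are precisely the connected components of the level sets of $f$, and because $z$ lies in an open edge the fiber $\fKRpr^{-1}(z)$ coincides with $\curveMeridian$; consequently $\fKRpr(\Torus\setminus\curveMeridian)=\fKRGraph\setminus\{z\}$. If $\curveMeridian$ does not separate $\Torus$, then $\Torus\setminus\curveMeridian$ is connected and so is its image $\fKRGraph\setminus\{z\}$. Conversely, if $\Torus\setminus\curveMeridian=U_{1}\sqcup U_{2}$ is a disjoint union of two nonempty connected open pieces, then $\fKRpr(U_{1})$ and $\fKRpr(U_{2})$ are connected subsets of $\fKRGraph\setminus\{z\}$ whose union is all of $\fKRGraph\setminus\{z\}$. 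They must be disjoint, since a common value $w\neq z$ would force the connected fiber $\fKRpr^{-1}(w)\subset\Torus\setminus\curveMeridian$ to meet both $U_{1}$ and $U_{2}$, which is absurd. Hence $\fKRGraph\setminus\{z\}$ is disconnected.

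I do not anticipate a serious obstacle; the argument is essentially formal. The only point demanding small care is the identification $\fKRpr^{-1}(z)=\curveMeridian$, which is precisely where the hypothesis that $z$ belongs to an open edge of $\fKRGraph$ (so that it corresponds to a single regular component of a level set, rather than to a critical vertex with several components collapsed together) is used.
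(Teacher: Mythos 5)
The paper gives no proof of this lemma --- it is explicitly ``left for the reader'' --- so there is nothing of the authors' to compare against; your write-up supplies the elementary argument that is evidently intended, and it is correct. The equivalence (a)$\iff$(b) via the bridge criterion in a unicyclic graph is fine, and (b)$\iff$(c) via the quotient map $\fKRpr$ is the natural route, with the hypothesis that $z$ lies on an open edge used exactly where you say, to guarantee $\fKRpr^{-1}(z)=\curveMeridian$. One small point should be made explicit at the very end: having written $\fKRGraph\setminus\{z\}=\fKRpr(U_1)\cup\fKRpr(U_2)$ with the two pieces nonempty, connected and disjoint, you cannot yet conclude disconnectedness --- a connected space can be partitioned into two nonempty connected subsets (e.g.\ $[0,1]=[0,\tfrac12]\cup(\tfrac12,1]$). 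You need the pieces to be open, and this does follow from your own disjointness argument: every fiber of $\fKRpr$ meeting $U_i$ is connected and contained in $U_1\sqcup U_2$, hence lies entirely in $U_i$, so each $U_i$ is saturated, $\fKRpr^{-1}(\fKRpr(U_i))=U_i$ is open, and since $\fKRpr$ is a quotient map each $\fKRpr(U_i)$ is open in $\fKRGraph$. With that sentence added the proof is complete.
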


Thus if $\fKRGraph$ is not a tree, then there exists a connected component $\curveMeridian$ of some level set of $f$ that does not separate $\Torus$, and this curve corresponds to some point $z$ on an open edge of cycle $\Upsilon$.

For simplicity we will fix once and for all such $f\in\FFF(\Torus)$ and $\curveMeridian$, and use the following notation:
\begin{align*}
\DT &:= \DiffId(\Torus), &
\Of &:= \Orbit_f(f), &
\Sf &:= \Stab'(f), &
\Sidf &:= \StabId(\Torus), \\
\DTC &:= \DiffId(\Torus, \curveMeridian), &
\OfC &:= \Orbit_f(f,\curveMeridian), &
\SfC &:= \Stab'(f,\curveMeridian), &
\SidfC &:= \StabId(f,\curveMeridian)
\end{align*}

Let also $h \in \Sf$, so $f\circ h = f$ and $h$ is isotopic to $\id_{\Torus}$.
Then $h(f^{-1}(c)) = f^{-1}(c)$, and therefore $h$ interchanges connected components of $f^{-1}(c)$.
In particular, $h(\curveMeridian)$ is also a connected component of $f^{-1}(c)$.
However, in general, $h(\curveMeridian)$ does not coincide with $\curveMeridian$, see Figure~\ref{fig:hC_not_C}.
\begin{figure}[h]
\center{\includegraphics[width=0.6\linewidth]{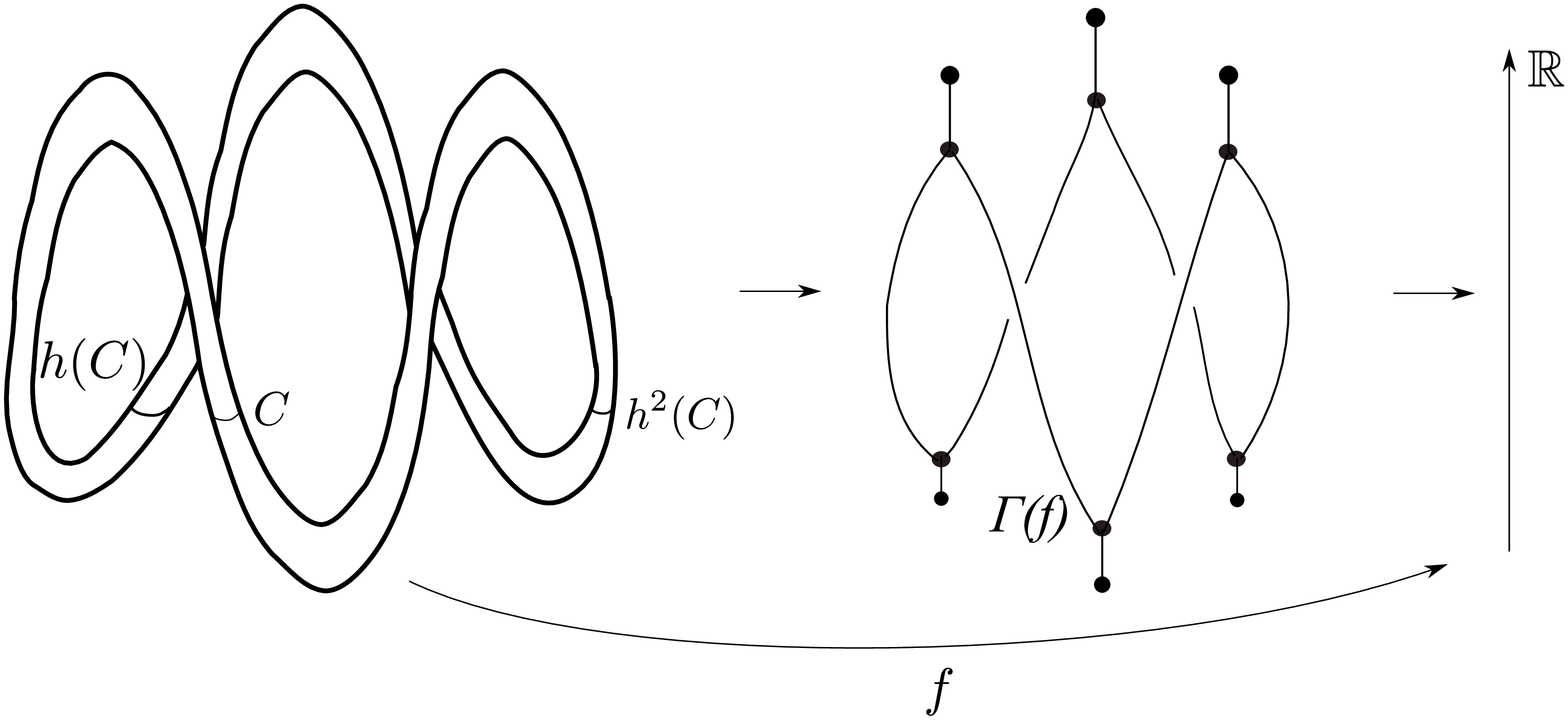}}
\caption{}
\label{fig:hC_not_C}
\end{figure}

\begin{definition}\label{def:triv_act_S_on_C}
We will say that \textit{$\Sf$ trivially acts on $\curveMeridian$} if $h(\curveMeridian) = \curveMeridian$ for all $h\in\Sf$.
\end{definition}

\begin{remark}\rm
Emphasize that the above definition only require that $h(\curveMeridian)=\curveMeridian$ for all diffeomorphisms $h$ that preserve $f$ and are \textit{isotopic to $\id_{\Torus}$}.
We do not put any assumptions on diffeomorphisms that are not isotopic to $\id_{\Torus}$.
\end{remark}

\begin{remark}\rm
It is easy to see that if $\curveMeridian_1$ is another non-separating regular component of some level-set $f^{-1}(c_1)$, then $\Sf$ trivially acts on $\curveMeridian$ if and only if $\Sf$ trivially acts on $\curveMeridian_1$.
\end{remark}

\begin{theorem}\label{th:main:pi1Of}
Let $f\in\FFF(\Torus)$ be such that $\fKRGraph$ has exactly one cycle, and $\curveMeridian$ be a regular connected component of certain level set $f^{-1}(c)$ of $f$ which does not separate $\Torus$.
Suppose $\Sf$ trivially acts on $\curveMeridian$.
Then there is a homotopy equivalence
$$
\Of \simeq \OfC\times S^1.
$$
In particular, we have the following isomorphisms:
$$
\pi_1\Of\ \cong \ \pi_1 \OfC\times\ZZZ  \ \stackrel{\fibr\times\id_{\ZZZ}}{\cong}\ \pi_0\SfC\times\ZZZ.
$$
\end{theorem}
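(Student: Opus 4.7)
The plan is to build an explicit candidate equivalence $\Phi\colon\OfC\times S^1\to\Of$ by using a flow on $\Torus$ transverse to $\curveMeridian$, and then to verify that $\Phi$ is a weak equivalence by comparing the Serre fibrations $\DT\to\Of$ and $\DTC\to\OfC$ from Theorem~\ref{th:fibration_DMX_Of}(2). Since $\curveMeridian$ is non-separating on $\Torus$, I fix a smooth flow $\flowParallel\colon\Torus\times\RRR\to\Torus$ whose orbits are simple closed curves transverse to $\curveMeridian$ of common period $1$, so that $\flowParallel_1=\id_{\Torus}$. Then $\zeta\colon S^1\to\DT$, $\zeta(t)=\flowParallel_t$, is a continuous loop whose class represents one of the two $\ZZZ$-summands of $\pi_1\DT\cong\ZZZ^2$. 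Define
\[
\Phi\colon\OfC\times S^1\longrightarrow\Of,\qquad \Phi(g,t)=g\circ\flowParallel_t;
\]
this is continuous and well defined on $S^1=\RRR/\ZZZ$ because $\flowParallel_1=\id$.

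By Theorem~\ref{th:fibration_DMX_Of}(3) applied to $\Torus$, $\Of$ is a $K(\pi_1\Of,1)$; by part~(4) with $\Xman=\curveMeridian\ne\varnothing$ both $\DTC$ and $\SidfC$ are contractible, so $\OfC$ is a $K(\pi_0\SfC,1)$ and hence $\OfC\times S^1$ is a $K(\pi_0\SfC\times\ZZZ,1)$. It therefore suffices to show that $\Phi_*$ is an isomorphism on $\pi_1$. The triviality hypothesis enters via the following structural observation: because every $h\in\Sf$ setwise preserves $\curveMeridian$, restriction defines a continuous homomorphism $\rho\colon\Sf\to\Diff(\curveMeridian)$ with image in the identity component, which is homotopy equivalent to $S^1$. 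A standard collar construction (extending arbitrary rotations of $\curveMeridian$ to $f$-preserving diffeomorphisms of $\Torus$ by a bump function, which preserves $f$ automatically) shows $\rho$ is surjective onto this $S^1$ with kernel $\SfC$; combined with $\pi_1\Sidf=0$ from part~(3), the Serre long exact sequence of $\SfC\to\Sf\to S^1$ collapses to
\[
1\longrightarrow\ZZZ\xrightarrow{~\kappa~}\pi_0\SfC\longrightarrow\pi_0\Sf\longrightarrow 1,
\]
where $\kappa(1)$ is the class of the endpoint $\psi\in\SfC$ of the lift to $\Sf$ of a generating full-rotation loop in $\Diff(\curveMeridian)$.

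Combining the above with~\eqref{equ:T2_pi1Of_exact_sequence} and the isomorphism $\pi_1\OfC\cong\pi_0\SfC$ (Theorem~\ref{th:fibration_DMX_Of}(4)(c)) yields the commutative ladder of short exact sequences
\[
\begin{CD}
1 @>>> \ZZZ\oplus\ZZZ @>>> \pi_0\SfC\oplus\ZZZ @>>> \pi_0\Sf @>>> 1 \\
@. @VV\alpha V @VV\Phi_* V @| @. \\
1 @>>> \ZZZ^2 @>>> \pi_1\Of @>>> \pi_0\Sf @>>> 1,
\end{CD}
\]
whose right square commutes because the composition $\pi_0\SfC\cong\pi_1\OfC\to\pi_1\Of\xrightarrow{\partial}\pi_0\Sf$ is the natural inclusion-induced map $\pi_0\SfC\to\pi_0\Sf$ (as both are computed by lifting a loop in $\OfC$ through $\DTC\subset\DT$ to an endpoint in $\SfC\subset\Sf$). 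On the left, $\alpha(0,1)=p_*[\flowParallel]$ and $\alpha(\kappa(1),0)=\pm p_*[\flowMeridian]$, and since these form a basis of $p_*(\pi_1\DT)\cong\ZZZ^2$, the five-lemma gives that $\Phi_*$ is an isomorphism. The second displayed isomorphism in the theorem is then immediate from Theorem~\ref{th:fibration_DMX_Of}(4)(c). \emph{The main obstacle} is the identification $\alpha(\kappa(1),0)=\pm p_*[\flowMeridian]$: given a path $\ell_t\subset\DTC$ from $\id$ to $\psi$ and a path $\eta_s\subset\Sidf$ from $\psi$ back to $\id$, one must show that the loop $\ell\cdot\eta$ in $\DT$ has class $\pm[\flowMeridian]$ in $\pi_1\DT\cong\pi_1\Torus$. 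Evaluating at any point $p\in\curveMeridian$ reduces this to the observation that $\ell_t(p)=p$ throughout (since $\ell_t\in\DTC$), while $\eta_s(p)$ traces a full meridional loop in $\curveMeridian$ because $\eta$ is, up to homotopy in $\Sidf$, the reverse of the collar construction producing $\psi$; the non-separating hypothesis on $\curveMeridian$ is essential here, as it ensures that this meridional loop represents a primitive class in $\pi_1\Torus$.
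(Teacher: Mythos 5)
Your overall architecture runs parallel to the paper's: both arguments reduce the theorem to an isomorphism on $\pi_1$ via asphericity, and both prove it by a five--lemma comparison of two extensions of $\pi_0\Sf$, the decisive inputs being that $\jZ:\pi_0\SfC\to\pi_0\Sf$ is surjective, that $\ker\jZ\cong\ZZZ$, and that its generator goes to $\pm$ the meridional class in $\qhom(\pi_1\DT)$. (Your $\Phi$ is exactly the equivalence $\kappa$ of Eq.~\eqref{equ:equivalence}.) Where you genuinely diverge is in producing the extension $1\to\ZZZ\to\pi_0\SfC\to\pi_0\Sf\to1$: you restrict diffeomorphisms to $\curveMeridian$ and use the exact sequence of $\SfC\to\Sf\to\Diff^{+}(\curveMeridian)\simeq S^1$, whereas the paper evaluates paths of diffeomorphisms at the base point (the unwinding homomorphism $\pi_1(\DT,\Sf)\to\pi_1(\Torus,\curveMeridian)$ of Lemma~\ref{lm:comm_diagramm}) and computes $\ker\jZ$ through the shift--function homomorphism of Lemma~\ref{lm:ker_j0}. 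Your route is appealingly short, but it rests on three unproven assertions: that $\rho$ lands in the identity component of $\Diff(\curveMeridian)$ (true, because $h$ acts trivially on $H_1(\Torus)$ and $[\curveMeridian]$ is primitive, but it needs saying); that $\rho$ is a fibration onto $\Diff^{+}(\curveMeridian)$ (this requires the collar of regular level curves around $\curveMeridian$ to build local $f$-preserving sections, i.e.\ the special coordinates of \S\ref{sect:some_constructions}); and, most seriously, that $\ker\rho=\SfC$. A priori $\ker\rho=\Stabf\cap\DiffIdT\cap\DiffTC$ could contain diffeomorphisms fixed on $\curveMeridian$ and isotopic to $\id_{\Torus}$ but not isotopic to $\id_{\Torus}$ relative to $\curveMeridian$; ruling this out is precisely Lemma~\ref{lm:h_C_id__h_in_DidTC}, a genuine Dehn--twist argument and not a formality. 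Likewise, the assertion that the lifted endpoint $\psi$ displaces points of $\curveMeridian$ by \emph{exactly one} full meridional loop is the integrality-of-the-shift-function computation of Lemma~\ref{lm:ker_j0}, and should be proved rather than asserted.

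The one step that fails outright as stated is the choice of $\flowParallel$. Requiring the orbits to be simple closed curves transverse to $\curveMeridian$ of period $1$ does not force an orbit to meet $\curveMeridian$ in a single point. Take $\flowParallel(x,y,t)=(x+2t,\,y+t)\ \mathrm{mod}\ \ZZZ^2$: its orbits are simple closed $(2,1)$-curves of period $1$, transverse to $\curveMeridian=0\times S^1$, and the class of an orbit is $2[\cycleParallel]+[\cycleMeridian]$. This class is primitive (so your sentence that $[\zeta]$ ``represents a $\ZZZ$-summand'' is true but not enough); what the five lemma needs is that it be \emph{complementary} to $[\cycleMeridian]$, and here the two classes generate only an index-two subgroup of $\pi_1\Torus\cong\ZZZ^2$. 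For this choice your $\alpha$ has image of index two in $\qhom(\pi_1\DT)$, so $\Phi_{*}$ is not surjective and $\Phi$ is not a homotopy equivalence. You must additionally demand that each orbit of $\flowParallel$ meet $\curveMeridian$ exactly once (intersection number $\pm1$), which is achievable precisely because $\curveMeridian$ is non-separating and is what the paper's special coordinates arrange. With that fix and the points above supplied, your evaluation-at-a-point identification $\alpha(\kappa(1),0)=\pm\,p_{*}[\flowMeridian]$ is correct and amounts to Lemmas~\ref{lm:ker_j0} and~\ref{lm:iso_kerjZ_M}.
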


\noindent
The proof of this theorem will be given in \S~\ref{sect:proof:th:main:pi1Of}.

\section{Preliminaries}\label{sect:prelim}

\subsection{Algebraic lemma}
\begin{lemma}\label{lm:partial_splitting}
Let $L, M, Q, S$ be four groups.
Suppose there exists a short exact sequence
\begin{equation}\label{equ:alg_lemma_exact_seq}
1 \to L \times M  \xrightarrow{~~\qhom~~} T \xrightarrow{~~\partial~~} S \to 1
\end{equation}
and a homomorphism $\eval: T \to L \times 1$ such that 
\begin{itemize}
\item
$\eval \circ ~\qhom: L\times 1 \longrightarrow L \times 1$ is the identity map and
\item
$\qhom(1 \times M) \subset \ker(\eval)$.
\end{itemize}
Then we have the following exact sequence:
\[
1 \longrightarrow 1\times M  \xrightarrow{~~q~~} \ker(\eval) \xrightarrow{~~\partial~~} S \longrightarrow 1.
\]
\end{lemma}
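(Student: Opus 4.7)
The statement is a purely algebraic diagram-chase, so my plan is to verify the three standard pieces of an exact sequence (injectivity on the left, surjectivity on the right, and matching kernel/image in the middle) using the two hypotheses on $\eval$.

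First, I would note that $q$ restricted to $1\times M$ lands in $\ker(\eval)$ by the second hypothesis and remains injective (since $q$ is injective on all of $L\times M$ by the original short exact sequence). This handles exactness at $1\times M$.

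Second, for surjectivity of $\partial$ onto $S$ when restricted to $\ker(\eval)$, the idea is to take an arbitrary $t\in T$ with $\partial(t)=s$ and adjust it by an element of $q(L\times 1)$ so that it lies in $\ker(\eval)$. Concretely, set $t' = q\bigl(\eval(t)^{-1}\bigr)\cdot t$, where we regard $\eval(t)\in L\times 1$ as an element of $L\times M$ through the inclusion. Applying $\eval$ and using $\eval\circ q = \mathrm{id}$ on $L\times 1$ gives $\eval(t')=\eval(t)^{-1}\eval(t)=1$, so $t'\in\ker(\eval)$. Since $q(L\times 1)\subset q(L\times M)=\ker(\partial)$, we still have $\partial(t')=s$.

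Third, for exactness in the middle, take $t\in\ker(\eval)\cap\ker(\partial)$. By exactness of \eqref{equ:alg_lemma_exact_seq}, $t=q(\ell,m)$ for some $(\ell,m)\in L\times M$. Writing $q(\ell,m)=q(\ell,1)\cdot q(1,m)$ and applying $\eval$, the second factor disappears by hypothesis while the first factor evaluates to $(\ell,1)$ because $\eval\circ q$ is the identity on $L\times 1$. Hence $\ell=1$ and $t=q(1,m)\in q(1\times M)$, which is the required inclusion; the reverse inclusion is immediate from the hypotheses.

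There is no real obstacle here — the only subtlety is keeping track of the identification $L\cong L\times 1\hookrightarrow L\times M$ when writing the correction element $q(\eval(t)^{-1})$ in the surjectivity step, and making sure the splitting $(\ell,m)=(\ell,1)(1,m)$ is genuinely a product in $L\times M$ (which it is, since $L\times M$ is a direct product and the two factors commute). Everything else is formal manipulation of a commutative diagram.
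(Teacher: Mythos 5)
Your proof is correct and follows essentially the same route as the paper: you correct an arbitrary preimage $t$ of $s$ by the element $q(\eval(t)^{\pm 1})$ to land in $\ker(\eval)$ without changing $\partial(t)$, and you identify $\ker(\eval)\cap\ker(\partial)$ with $q(1\times M)$ by splitting $q(\ell,m)=q(\ell,1)\,q(1,m)$ and applying $\eval$. The only cosmetic difference is that you multiply the correction on the left where the paper multiplies on the right; both work.
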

\begin{proof}
It suffices to prove that
\begin{itemize}
\item[1)] $\partial(\ker\eval) = S$,
\item[2)] $\eval(1\times M) = \ker\eval \cap \ker\partial$.
\end{itemize}

1) Let $s \in S$.
We have for find $b\in \ker\eval$ such that $\partial(b) = s$.
Since $\partial(T) = S$, there exists $t \in T$ such that $\partial(t) = s$.
Put $\hat{t} = \qhom(\eval(t))$ and $b = t\,\hat{t}^{-1}$.
Then
\begin{align*}
\eval(\hat{t}) &= \eval \circ \qhom \circ \eval(t) = \eval(t),
&
\partial(\hat{t}) &= \partial \circ \qhom \circ \eval(t)  = 1.
\end{align*}
Hence $b= t \hat{t}^{-1} \in \ker\eval$ and 
\[ 
\partial(b) = \partial(t) \,\partial(\hat{t})^{-1} = \partial(t) = s.
\]

2) Let $a \in \ker\eval \cap \ker\partial$.
We should find $m \in M$ such that $\qhom(1,m) = a$.

As $a \in \ker\partial = \qhom(L \times M)$, so there exist $(l,m) \in L\times M$ such that $\qhom(l,m) = a$.
But $\qhom(1,m) \in \qhom(1\times M) \subset \ker\eval$, whence
\[
(1,1) = \eval(a) = \eval(\qhom(l,m)) = \eval(\qhom(l,1)) \cdot  \eval(\qhom(1,m)) =  \eval(\qhom(l,1)) = (l,1).
\]
Hence $l=1$, and so $a = \qhom(1,m) \in \qhom(1\times M)$. 
\end{proof}

\subsection{Isotopies of $\Torus$ fixed on a curve}
We will need the following general lemma claiming that if a diffeomorphism $h$ of $\Torus$ is fixed on a non-separating simple closed curve $\curveMeridian$ and is isotopic to $\id_{\Torus}$, then an isotopy between $h$ and $\id_{\Torus}$ can be made fixed on $\curveMeridian$.

\begin{lemma}\label{lm:h_C_id__h_in_DidTC}
Let $\curveMeridian \subset \Torus$ be a not null-homotopic smooth simple closed curve.
Then
\begin{equation}\label{equ:DiffIdT_cap_DiffTC__DiffIdTC}
 \DiffIdTC = \DiffIdT\cap \DiffTC.
\end{equation}
\end{lemma}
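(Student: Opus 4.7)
The inclusion $\DiffIdTC \subseteq \DiffIdT \cap \DiffTC$ is immediate from the definitions, so only the reverse inclusion requires argument. Given $h \in \DiffIdT \cap \DiffTC$, the plan is to produce an isotopy from $h$ to $\id_{\Torus}$ lying entirely in $\DiffTC$, which I would carry out in three stages.

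\textbf{Stage 1 (straightening $h$ near $\curveMeridian$).} Fix a tubular neighbourhood $N \cong \curveMeridian \times (-1,1)$ of $\curveMeridian$. Since $h \in \DiffIdT$ is orientation-preserving and fixes $\curveMeridian$ pointwise, $dh|_{\curveMeridian}$ is the identity on $T\curveMeridian$ and an orientation-preserving automorphism of the normal line bundle; in particular $h$ locally preserves the two sides of $\curveMeridian$. A standard collar/linearisation argument, using the tubular neighbourhood theorem to control successively the first-order and higher-order normal jets, then produces an isotopy $\{h_t\}_{t\in[0,1]} \subset \DiffTC$ with $h_0 = h$ and $h_1 = \id_{\Torus}$ on a smaller neighbourhood $N' \subset N$.

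\textbf{Stages 2--3 (cut and classify).} Because $\curveMeridian$ is not null-homotopic it is non-separating, so cutting $\Torus$ along $\curveMeridian$ gives a closed annulus $A$ whose two boundary circles are copies of $\curveMeridian$. The map $h_1$ descends through this cut to a diffeomorphism $\tilde h_1$ of $A$ that is the identity on a collar of $\partial A$. By the classical description of the mapping class group of the annulus rel boundary, $\pi_0 \Diff(A, \partial A) \cong \ZZZ$, generated by a Dehn twist $T$ along the core curve; hence $\tilde h_1$ is isotopic rel $\partial A$ to $T^n$ for some $n \in \ZZZ$. Regluing exhibits $h$ as isotopic in $\DiffTC$ to the $n$-th power $T_{\curveMeridian}^n$ of the Dehn twist along $\curveMeridian$. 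But $T_{\curveMeridian}$ acts on $H_1(\Torus;\ZZZ) = \ZZZ^2$ by the Picard--Lefschetz formula $x \mapsto x + \langle x, [\curveMeridian]\rangle\, [\curveMeridian]$, which is nontrivial because $[\curveMeridian] \neq 0$. Thus $T_{\curveMeridian}^n \in \DiffIdT$ forces $n=0$, so $\tilde h_1$ is actually isotopic to $\id_A$ rel $\partial A$; extending this isotopy by the identity across $\curveMeridian$ and concatenating with the isotopy of Stage 1 produces the desired path from $h$ to $\id_{\Torus}$ in $\DiffTC$.

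The main obstacle is Stage 1: once $h$ has been straightened to be the identity on a neighbourhood of $\curveMeridian$, the rest is a clean invocation of the mapping class group of the annulus together with the Picard--Lefschetz formula for the torus. The collar straightening, by contrast, is the only genuinely geometric input, and must be done carefully so that at every time $t$ the diffeomorphism $h_t$ still pointwise fixes $\curveMeridian$ rather than merely preserving it setwise.
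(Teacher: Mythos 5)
Your proof is correct and follows essentially the same route as the paper: cut $\Torus$ along $\curveMeridian$ to get an annulus $Q$, use $\pi_0\Diff(Q,\partial Q)\cong\ZZZ$ generated by the Dehn twist to identify $h$ up to isotopy rel $\curveMeridian$ with a twist power $\tau^n$, and conclude $n=0$ because $h\in\DiffIdT$ while a nontrivial twist power is not isotopic to the identity. Your Stage~1 collar-straightening and the Picard--Lefschetz justification of $n=0$ supply details the paper leaves implicit, but they do not change the argument.
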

\begin{proof}
The inclusion $\DiffIdTC \ \subset \ \DiffIdT\,\cap\, \DiffTC$ is evident.
Therefore we have to establish the inverse one.

Let $h\in\DiffIdT\,\cap\, \DiffTC$, so $h$ is fixed on $\curveMeridian$ and is isotopic to $\id_{\Torus}$.
We have to prove that $h\in \DiffIdTC$, i.e. it is isotopic to $\id_{\Torus}$ via an isotopy fixed on $\curveMeridian$.

Let $\curveMeridian_1 \subset \Torus$ be a simple closed curve isotopic to $\curveMeridian$ and disjoint from $\curveMeridian$, and $\tau:\Torus\to\Torus$ be a Dehn twist along $\curveMeridian_1$ fixed on $\curveMeridian$.
Cut the torus $\Torus$ along $\curveMeridian$ and denote the resulting cylinder by $Q$.

Notice that the restrictions $h|_{Q}, \tau|_{Q}: Q \to Q$ are fixed on $\partial Q$.
It is well-known that the isotopy class $\tau|_{Q}$ generates the group $\pi_0\Diff(Q, \partial Q) \cong\ZZZ$.
Hence there exists $n\in\ZZZ$ such that $h|_{Q}$ is isotopic to $\tau^n|_{Q}$ relatively to $\partial Q$.
This isotopy induces an isotopy between $h$ and $\tau^n$ fixed on $\curveMeridian$.

By assumption $h$ is isotopic to $\id_{\Torus}$, while $\tau^{n}$ is isotopic to $\id_{\Torus}$ only for $n=0$.
Hence $h$ is isotopic to $\tau^0 = \id_{\Torus}$ via an isotopy fixed of $\curveMeridian$.
\end{proof}

\subsection{Smooth shifts along trajectories of a flow}
Let $\flow:M \times \RRR\to M$ be a smooth flow on a manifold $M$.
Then for every smooth function $\alpha:M \to \RRR$ one can define the following map $\flow_{\alpha}:\Torus\to\RRR$ by the formula:
\begin{equation}\label{equ:shooth_shift}
\flow_{\alpha}(z) = \flow(z, \alpha(z)), \qquad z\in M. 
\end{equation}

\begin{lemma}
If $\flow_{\alpha}$ is a {\bfseries diffeomorphism} then for each $t\in[0,1]$ the map
\[
\flow_{t\alpha}:M \to M,
\qquad 
\flow_{t\alpha}(z) = \flow(z, t\alpha(z))
\]
is a diffeomorphism as well.
In particular, $\{\flow_{t\alpha}\}_{t\in I}$ is an isotopy between $\id_{M} = \flow_0$ and $\flow_{\alpha}$.
\end{lemma}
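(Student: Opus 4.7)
The plan is to reduce invertibility of the differential $d(\flow_{t\alpha})$ to a single scalar condition, propagate that condition from $t=1$ to every $t\in[0,1]$ by convexity, and then upgrade ``local diffeomorphism'' to ``global diffeomorphism'' by restricting to individual trajectories of $\flow$.

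Write $\phi_{s}(z):=\flow(z,s)$ and let $V$ denote the generating vector field of $\flow$. The chain rule gives
\[
d(\flow_{t\alpha})_{z}(v) \;=\; (\phi_{t\alpha(z)})_{*}\,v \;+\; t\, d\alpha_{z}(v)\cdot V\bigl(\flow_{t\alpha}(z)\bigr),
\]
and the flow-invariance $(\phi_{s})_{*}V(z)=V(\phi_{s}(z))$ lets me factor this as
\[
d(\flow_{t\alpha})_{z} \;=\; (\phi_{t\alpha(z)})_{*}\circ\bigl(\,I + t\, V(z)\otimes d\alpha_{z}\,\bigr).
\]
The rank-one perturbation $I + tV(z)\otimes d\alpha_{z}$ is invertible iff $1+t\,V(\alpha)(z)\ne 0$, so this scalar condition exactly characterizes when $\flow_{t\alpha}$ is a local diffeomorphism at $z$.

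The hypothesis that $\flow_{\alpha}$ is a diffeomorphism yields $1+V(\alpha)\ne 0$ everywhere on $M$; in the paper's context $\flow_{\alpha}$ belongs to the identity component of $\Diff(M)$, so it preserves orientation and $1+V(\alpha)>0$ globally (at any zero of $V$ one directly has $1+V(\alpha)=1$, and a connectedness argument propagates the positive sign). For $t\in[0,1]$ the convex combination
\[
1 + t\,V(\alpha) \;=\; (1-t)\cdot 1 + t\cdot\bigl(1+V(\alpha)\bigr)
\]
is then strictly positive, so every $\flow_{t\alpha}$ is a local diffeomorphism. For global bijectivity I use that $\flow_{t\alpha}$ preserves each trajectory of $\flow$: parametrizing such a trajectory by $s$, the induced map is $\gamma_{t}(s)=s+t\,\alpha(\flow(z,s))$ with positive derivative $1+t\,V(\alpha)(\flow(z,s))$, hence strictly increasing. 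Since $\gamma_{1}$ is already a bijection (coming from the global diffeomorphism $\flow_{\alpha}$), on a non-closed orbit $\gamma_{1}(s)\to\pm\infty$ as $s\to\pm\infty$, and the identity $\gamma_{t}=(1-t)\,\id_{\RRR}+t\,\gamma_{1}$ propagates these asymptotics to every $t\in[0,1]$; on a closed orbit of period $T$ the relation $\gamma_{t}(s+T)=\gamma_{t}(s)+T$ lets $\gamma_{t}$ descend to a degree-one diffeomorphism of $\RRR/T\ZZZ$. Thus $\flow_{t\alpha}$ is bijective, hence a diffeomorphism.

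Smoothness of $(z,t)\mapsto\flow_{t\alpha}(z)$ is manifest from the defining formula, so $\{\flow_{t\alpha}\}_{t\in[0,1]}$ is a smooth isotopy from $\id_{M}=\flow_{0}$ to $\flow_{\alpha}$. The main obstacle is the positivity step: a bare diffeomorphism hypothesis on $\flow_{\alpha}$ does not formally rule out $1+V(\alpha)<0$ (consider $M=\RRR$, $V=\partial_{x}$, $\alpha(x)=-2x$, where $\flow_{\alpha}(x)=-x$ is a diffeomorphism while $\flow_{\alpha/2}$ collapses to the zero map at $t=1/2$), so this argument implicitly invokes the standing orientation/identity-component assumptions governing the flows $\flow$ that actually appear in the paper.
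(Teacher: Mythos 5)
Your proof is correct, and there is in fact nothing in the paper to compare it with: the lemma is stated without proof (it is imported from the first author's earlier work \cite{Maksymenko:TA:2003}), and the argument given there is essentially yours --- the factorization $d(\flow_{t\alpha})_{z}=(\phi_{t\alpha(z)})_{*}\circ\bigl(I+t\,V(z)\otimes d\alpha_{z}\bigr)$, the scalar criterion $1+t\,V(\alpha)\neq 0$, and the reduction of global bijectivity to monotone self-maps of individual orbits. The most valuable part of your write-up is the closing caveat, and you are right to insist on it: as literally stated, for an arbitrary flow on an arbitrary manifold the lemma is false, and your example $M=\RRR$, $\flow(x,s)=x+s$, $\alpha(x)=-2x$ (so that $\flow_{\alpha}(x)=-x$ is a diffeomorphism while $\flow_{\alpha/2}\equiv 0$) is a genuine counterexample. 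The statement is rescued precisely in the situation where the paper uses it: the flow $\flow$ of \S\ref{sect:some_constructions} has fixed points (the critical points of $f$, which always exist on $\Torus$), $\Torus$ is connected, and $1+V(\alpha)$ is nowhere zero, so its value $1$ at a fixed point forces positivity everywhere; alternatively, the maps $\flow_{\alpha}$ that actually arise lie in $\StabIdf\subset\DiffIdT$ and hence preserve orientation, which gives the same sign. One of these observations should be made explicit rather than left as an aside. Two small points worth tightening: state why a bijection of $M$ that sends every orbit into itself must send every orbit onto itself (orbits are pairwise disjoint and cover $M$), since this is what entitles you to say $\gamma_{1}$ is a bijection of each orbit; and conclude explicitly that a smooth bijection which is everywhere a local diffeomorphism is a diffeomorphism, so orbitwise bijectivity does finish the argument.
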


\subsection{Some constructions associated with $f$}\label{sect:some_constructions}
In the sequel we will regard the circle $S^1$ and the torus $\Torus$ as the corresponding factor-groups $\RRR/\ZZZ$ and $\RRR^2/\ZZZ^2$.
Let $e = (0,0) \in \Torus$ be the unit of $\Torus$.
We will always assume that $e$ is a base point for all homotopy groups related with $\Torus$ and its subsets.
For $\eps\in(0,0.5)$ let \[ J_{\eps} = (-\eps,\eps) \subset S^1\]
be an open $\eps$-neighbourhood of $0\in S^1$.

\medskip 

Let $f\in\FFF(\Torus)$ be a function such that its KR-graph $\fKRGraph$ has only one cycle, and let $\curveMeridian$ be a regular connected component of certain level set of $f$ not separating $\Torus$.
For this situation we will now define several constructions ``adopted'' with $f$.

\medskip 

{\bf Special coordinates.}
Since $\curveMeridian$ is non-separating and if a regular component of $f^{-1}(c)$, one can assume (by a proper choice of coordinates on $\Torus$) that the following two conditions hold:
\begin{enumerate}
\item[\rm(a)]
$\curveMeridian = 0 \times S^1 \ \subset \ \RRR^2/\ZZZ^2 \equiv \Torus$\,;
\item[\rm(b)] 
there exists $\eps>0$ such that for all $t \in J_{\eps} = (-\eps,\eps)$ the curve $t \times S^1$ is a regular connected component of some level set of $f$.
\end{enumerate}
It is convenient to regard $\curveMeridian$ as a \emph{meridian} of $\Torus$.
Let $\curveParallel = S^1 \times 0$ be the corresponding \emph{parallel}.
Then $\curveParallel \cap \curveMeridian  = e$, see Figure~\ref{fig:spec_coordinates}.
Consider also the following loops $\cycleParallel,\cycleMeridian: I \to \Torus$ defined by 
\begin{align}\label{equ:loops_paral_merid}
\cycleParallel(t) &= (t, 0), & \cycleMeridian(t) &= (0, t).
\end{align}
They represent the homotopy classes of $\curveParallel$ and $\curveMeridian$ in $\pi_1\Torus$ respectively.

Let us also mention that $\curveMeridian$ is a \textit{subgroup} of the group $\Torus$.
Therefore $\pi_1(\Torus,\curveMeridian)$ has a natural groups structure.

Let $k: \curveMeridian \hookrightarrow \Torus$ be the inclusion map.
Then the corresponding homomorphism $k:\pi_1\curveMeridian\to\pi_1\Torus$ is injective.
Since $\curveMeridian$ is also connected, i.e.\! $\pi_0\curveMeridian = \{1\}$, we get the following short exact sequence of homotopy groups of the pair $(\Torus,\curveMeridian)$:
\begin{equation}\label{equ:exact_seq_T_C}
1 \longrightarrow \pi_1\curveMeridian  \xrightarrow{~~k~~}  \pi_1\Torus  \xrightarrow{~~r~~} \pi_1(\Torus,\curveMeridian) \longrightarrow 1.
\end{equation}
As $\pi_1\curveMeridian \cong\ZZZ$ and $\pi_1\Torus\cong\ZZZ^2$, it follows that $\pi_1(\Torus,\curveMeridian) \cong \ZZZ$ and this group is generated by the image $r[\cycleParallel]$ of the homotopy class of the parallel $\cycleParallel$.
In particular, there exists a section 
\begin{equation}\label{equ:sect_of_r}
s: \pi_1(\Torus,\curveMeridian) \longrightarrow \pi_1\Torus
\end{equation}
such that $r \circ s[\cycleParallel] = [\cycleParallel]$, so $r\circ s$ is the identity map of $\pi_1(\Torus,\curveMeridian)$.

\begin{figure}[h]
\center{\includegraphics[width=6cm]{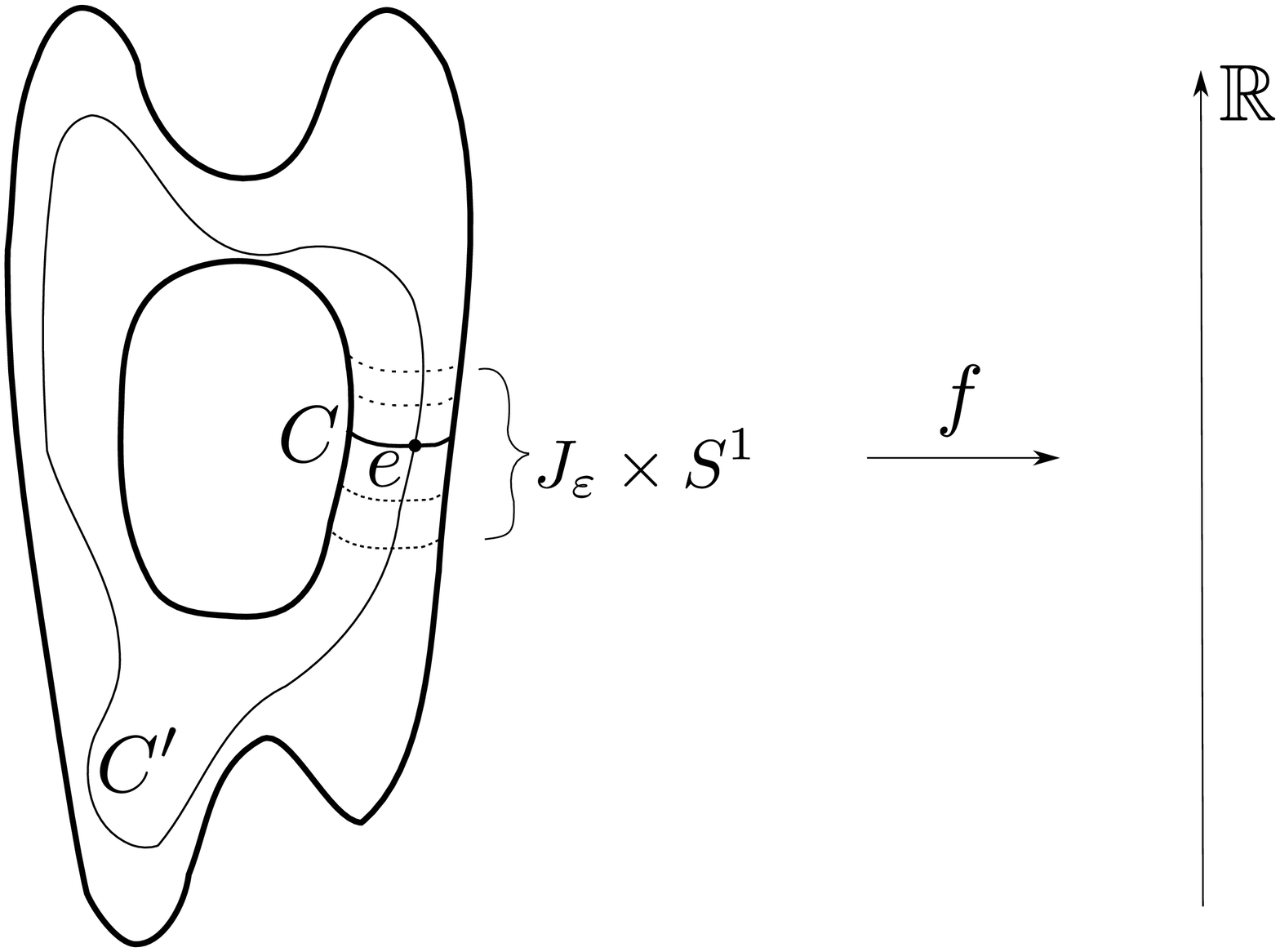}}
\caption{}
\label{fig:spec_coordinates}
\end{figure}
\bigskip

{\bf An inclusion $\torInDiff:\Torus \subset \DT$. }
Notice that $\Torus$ is a connected Lie group.
Therefore it acts on itself by smooth left translations.
This yields the following embedding $\torInDiff:\Torus\hookrightarrow\DT$: if $(a,b)\in\Torus$, then $\torInDiff(a,b):\Torus\to\Torus$ is a diffeomorphism given by the formula:
\begin{equation}\label{equ:inclusion_T2_DT2}
\torInDiff(a,b)(x,y) = \bigl(x+a \ \mathrm{mod} \ 1, \ \ y+b \ \mathrm{mod} \ 1\bigr).
\end{equation}
It is well known that $\torInDiff$ is a homotopy equivalence, see e.g.~\cite{Gramain:ASENS:1973}.

Notice also that $\torInDiff$ yields the following map
\begin{equation}\label{equ:pi1T2_cont_maps_ind_map}
\torInDiff: C\bigl( (I, \partial I), (\Torus,e) \bigr)
\longrightarrow 
C\bigl( (I, \partial I), (\DT, \id_{\Torus}) \bigr)
\end{equation}
between the \textit{spaces of loops} defined as follows: if $\omega:(I, \partial I) \longrightarrow (\DT, \id_{\Torus})$ is a continuous map, then 
\[
\torInDiff(\omega) = \torInDiff \circ \omega: \ I \longrightarrow \DT.
\]
It is well known that this map is continuous with respect to compact open topologies.
Moreover the corresponding map between the path components is just the homomorphism of fundamental groups:
\begin{equation}\label{equ:iso_pi1T2_pi1DT2}
 \torInDiff: \pi_1\Torus \longrightarrow \pi_1\DT.
\end{equation}
Since $\torInDiff$ is a homotopy equivalence, the homomorphism~\eqref{equ:iso_pi1T2_pi1DT2} is in fact an isomorphism.

To simplify notation we denoted all these maps with the same letter $\torInDiff$.
However this will never lead to confusion.

\medskip 

{\bf Isotopies $\flowParallel$ and $\flowMeridian$.}
Let 
\begin{align}\label{equ:LM_loops}
\flowParallel &= \torInDiff(\cycleParallel), 
&
\flowMeridian &= \torInDiff(\cycleMeridian) 
\end{align}
be the images of the loops $\cycleParallel$ and $\cycleMeridian$ in $\DT$ under the map Eq.~\eqref{equ:pi1T2_cont_maps_ind_map}
Evidently, they can be regarded as isotopies $\flowParallel, \flowMeridian: \Torus \times [0,1]\to \Torus$ defined by
\begin{align}\label{equ:LM_flows}
\flowParallel(x,y,t) &= (x + t \ \mathrm{mod} \ 1,\  y),
&
\flowMeridian(x,y,t) &= (x, \ y + t \ \mathrm{mod} \ 1),
\end{align}
for $x\in \curveParallel$, $y\in \curveMeridian$, and $t\in[0,1]$.
Geometrically, $\flowParallel$ is a ``\emph{rotation}'' of the torus along its parallels and $\flowMeridian$ is a rotation along its meridians.

Denote by $\subgroupParallel$ and $\subgroupMeridian$ the subgroups of $\pi_1\DT$ generated by loops $\flowParallel$ and $\flowMeridian$ respectively.
Since $\torInDiff:\Torus \to \DT$ is a homotopy equivalence, and the loops $\cycleParallel$ and $\cycleMeridian$ freely generate $\pi_1\Torus$, it follows that $\subgroupParallel$ and $\subgroupMeridian$ are commuting free cyclic groups, and so we get an isomorphism:
\[
\pi_1\DT \cong \subgroupParallel \times \subgroupMeridian.
\]

Also notice that $\flowParallel$ and $\flowMeridian$ can be also regarded as \emph{flows} $\flowParallel, \flowMeridian: \Torus \times \RRR\to \Torus$ defined by the same formulas Eq.~\eqref{equ:LM_flows} for $(x,y,t)\in\Torus\times\RRR$.
All orbits of the \textit{flows} $\flowParallel$ and $\flowMeridian$ are periodic of period $1$.
We will denoted these flows by the same letters as the corresponding \textit{loops}~\eqref{equ:LM_loops}, however this will never lead to confusion.

\bigskip

{\bf A flow $\flow$.}
As $\Torus$ is an orientable surface, there exists a flow $\flow:\Torus\times\RRR\to\Torus$ having the following properties, see e.g.~\cite[Lemma~5.1]{Maksymenko:AGAG:2006}:
\begin{itemize}
\item[1)]
a point $z\in\Torus$ is fixed for $\flow$ if and only if $z$ is a critical point of $f$;
\item[2)]
$f$ is constant along orbits of $\flow$, that is $f(z) = f(\flow(z,t))$ for all $z\in\Torus$ and $t\in\RRR$.
\end{itemize}
It follows that every critical point of $f$ and every regular components of every level set of $f$ is an orbit of $\flow$.

In particular, each curve $t\times S^1$ for $t\in J_{\eps}$ is an orbit of $\flow$.
On the other hand, this curve is also an orbit of the flow $\flowMeridian$.
Therefore, we can always choose $\flow$ so that
\begin{equation}\label{equ:relation_flowMer_flowHam}
\flowMeridian(x,y,t) = \flow(x,y,t),
\qquad
(x,y,t) \in  J_{\eps}\times S^1\times\RRR.
\end{equation}

\begin{lemma}\label{lm:shift_functions}{\rm\cite[Lemma~5.1]{Maksymenko:AGAG:2006}.}
Suppose a flow $\flow:\Torus\times\RRR\to\Torus$ satisfies the above conditions {\rm1)} and {\rm2)} and let $h\in\Stabf$.
Then $h\in\StabIdf$ if and only if there exists a $C^{\infty}$ function $\alpha:\Torus\to\RRR$ such that $h = \flow_{\alpha}$, see Eq.~\eqref{equ:shooth_shift}.
Such a function is unique and the family of maps $\{\flow_{t\alpha}\}_{t\in I}$ constitute an isotopy between $\id_{M}$ and $h$.
\qed
\end{lemma}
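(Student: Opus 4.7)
I would prove the three assertions --- sufficiency, necessity, and uniqueness --- following the scheme of~\cite{Maksymenko:AGAG:2006}.

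\textbf{Sufficiency and the isotopy.} Suppose $h = \flow_\alpha$ for some smooth $\alpha\colon \Torus\to\RRR$. Then by the preceding lemma on smooth shifts, each $\flow_{t\alpha}$ with $t\in[0,1]$ is a diffeomorphism of $\Torus$, the family $\{\flow_{t\alpha}\}_{t\in I}$ depends continuously on $t$, and it joins $\flow_0 = \id_\Torus$ to $h = \flow_\alpha$. Since $f$ is constant along orbits of $\flow$ by property~2),
\[
  f(\flow_{t\alpha}(z)) \;=\; f(\flow(z, t\alpha(z))) \;=\; f(z),
\]
so every $\flow_{t\alpha}$ lies in $\Stabf$, whence $h \in \StabIdf$.

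\textbf{Necessity.} For the converse I would pick an isotopy $H\colon \Torus \times I \to \Torus$ inside $\Stabf$ with $H_0 = \id_\Torus$ and $H_1 = h$. Each $H_t$ permutes the connected components of every level set of $f$; since $H_0$ fixes each such component, continuity in $t$ forces every $H_t$ to preserve each component setwise, and in particular to map each orbit of $\flow$ to itself (regular level-set components are orbits by property~1)--2), and critical points of $f$ are fixed points of $\flow$). On the open dense regular set, the orbit map $s\mapsto\flow(z,s)$ is a covering of the orbit circle, so the path $t\mapsto H_t(z)$ lifts uniquely to a path $t\mapsto\alpha_t(z)\in\RRR$ with $\alpha_0(z) = 0$ and $H_t(z) = \flow(z,\alpha_t(z))$. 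Because the derivative of $\flow$ in the time variable is non-vanishing at regular points, the implicit function theorem makes $\alpha_t$ smooth in $(z,t)$ over the regular set; setting $\alpha := \alpha_1$ gives $h = \flow_\alpha$ off the critical set.

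\textbf{Smoothness at critical points, uniqueness, and the main obstacle.} The remaining step --- and the technical heart of the lemma --- is to extend $\alpha$ smoothly across each critical point $z_0$ of $f$. Here the defining property of $\FFF(\Torus)$ enters: the germ of $f$ at $z_0$ is smoothly equivalent to a homogeneous polynomial without multiple factors, and by the normal-form analysis of~\cite{Maksymenko:AGAG:2006, Maksymenko:MFAT:2009} one can arrange the flow $\flow$ near $z_0$ so that every $f$-preserving diffeomorphism isotopic to $\id_\Torus$ through $\Stabf$ admits a smooth shift representation $\flow_\alpha$ with $\alpha$ smooth at $z_0$. Matching these local extensions with the regular-set construction yields the desired global smooth $\alpha$. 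Uniqueness follows from the observation that on every regular orbit the ambiguity in $\alpha$ is a locally constant integer multiple of the period function; continuity across the critical points, together with the initial condition $\alpha_0 \equiv 0$ at the start of the lift, pins down this integer to zero on each component. The genuinely hard part is precisely the smoothness at \emph{degenerate} critical points, where $\flow$ exhibits a mixture of elliptic and hyperbolic sectors controlled by the real factorisation of $f_{z_0}$; the ``no multiple factors'' hypothesis is exactly what makes the normal forms available and allows the smooth extension of $\alpha$ to go through.
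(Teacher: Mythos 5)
The paper does not actually prove this lemma: it is imported verbatim from \cite[Lemma~5.1]{Maksymenko:AGAG:2006} and stated with a closing \qed, so there is no in-paper argument to compare yours against. Judged on its own terms, your sketch gets the easy direction right (if $h=\flow_{\alpha}$ is a diffeomorphism, then $\{\flow_{t\alpha}\}$ is an isotopy inside $\Stabf$ by the preceding lemma and property 2), hence $h\in\StabIdf$), and your lifting construction on the regular set is the standard and correct start of the converse.

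However, as a proof the proposal has a genuine gap exactly where the lemma has its content. In the necessity direction you write that ``one can arrange the flow $\flow$ near $z_0$ so that every $f$-preserving diffeomorphism isotopic to $\id_{\Torus}$ through $\Stabf$ admits a smooth shift representation with $\alpha$ smooth at $z_0$'' --- but that sentence \emph{is} the lemma; asserting it with a pointer to \cite{Maksymenko:AGAG:2006, Maksymenko:MFAT:2009} reproduces the citation rather than supplying an argument. The extension of $\alpha$ across a degenerate critical point (division of $H_t(z)-z$ by the vector field, control of the period function near saddles and separatrices) is the hard analytic step and cannot be waved through. Two smaller points: your covering-space lifting is phrased only for periodic orbits, whereas non-fixed points on \emph{critical} level components lie on non-periodic separatrix orbits and need the (easier) injective-orbit case; and your uniqueness argument conflates uniqueness of the \emph{lift} $\alpha_t$ (which the initial condition $\alpha_0\equiv 0$ does fix) with uniqueness of the shift function itself --- the latter requires showing that any two smooth $\alpha,\beta$ with $\flow_{\alpha}=\flow_{\beta}=h$ coincide, which follows not from the initial condition but from the fact that $\alpha-\beta$ is an integer multiple of the period function on each periodic orbit and must vanish near the saddle points (which every $f\in\FFF(\Torus)$ possesses), hence on a dense set, hence everywhere by continuity.
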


\section{Proof of Theorem~\ref{th:main:pi1Of}}\label{sect:proof:th:main:pi1Of}
Let $f\in\FFF(\Torus)$ be such that its KR-graph $\fKRGraph$ has only one cycle, and let $\curveMeridian$ be a non-separating regular connected component of certain level set of $f$.
Assume also that $\Sf$ trivially acts of $\curveMeridian$.
We have to prove that there exists a homotopy equivalence $\OfC\times S^1 \simeq \Of$.

By (3) and (4) of Theorem~\ref{th:fibration_DMX_Of} the orbits $\Of$ and $\OfC$ are aspherical, as well as $S^1$, i.e. their homotopy groups $\pi_k$ vanish for $k\geq2$.
Therefore, by Whitehead Theorem~\cite[\S~4.1, Theorem~4.5]{Hatcher:AlgTop:2002}, it suffices to show that there exists an isomorphism
$\pi_1\OfC \times \pi_1 S^1 \ \cong \ \pi_1\Of$.
Such an isomorphism will induce a required homotopy equivalence. 

Moreover, due to (2) of Theorem~\ref{th:fibration_DMX_Of} we have isomorphisms: 
\begin{align*}
\pi_1(\DTC, \SfC) & \ \cong \ \pi_1 \OfC, &
\pi_1(\DT, \Sf) & \ \cong \ \pi_1 \Of.
\end{align*}
Therefore it remains to find the following isomorphism:
\begin{equation}\label{equ:pi1DS_pi1DCSC_Z}
\pi_1(\DTC, \SfC) \times \pi_1 S^1 \ \cong \ \pi_1(\DT, \Sf).
\end{equation}

Notice that every smooth function $f:\Torus\to\RRR$ always have critical points being not local extremes, since otherwise $\Torus$ would be diffeomorphic with a $2$-sphere $S^2$.
Therefore by (3) and (4) of Theorem~\ref{th:main:pi1Of} the spaces $\Sf$, $\SfC$, and $\DTC$ are contractible.
Moreover, as noted above, $\DT$ is homotopy equivalent to $\Torus$.

Let $i:(\DTC, \SfC) \subset (\DT, \Sf)$ be the inclusion map.
It yields a morphism between the exact sequences of homotopy groups of these pairs.
The non-trivial part of this morphism is contained in the following commutative diagram:
\begin{equation}\label{equ:morphism_of_sequences}
\begin{CD}
1 @>{}>> 1 @>>> \pi_1(\DTC,\SfC) @>{\partial_{\curveMeridian}}>> \pi_0\SfC @>{}>> 1 \\
&& @V{}VV @V{\jO}VV @V{\jZ}VV \\
1 @>{}>> \pi_1\DT @>{\qhom}>> \pi_1(\DT,\Sf) @>{\partial}>> \pi_0\Sf @>{}>> 1
\end{CD}
\end{equation}
The proof of Theorem~\ref{th:main:pi1Of} is based on the following two Propositions~\ref{pr:hom_onto_L} and~\ref{pr:j0_properties} below.

\begin{proposition}\label{pr:hom_onto_L}
Under assumptions of Theorem~\ref{th:main:pi1Of} there exists an epimorphism 
\[ \eval:\pi_1(\DT, \Sf)\longrightarrow \subgroupParallel \]
such that 
\begin{enumerate}
\item[\rm1)]
$\eval$ is a left inverse for $\qhom$, that is $\eval\circ \qhom = \id_{\subgroupParallel}$

\item[\rm2)]
$\qhom(\subgroupMeridian) \subset \ker\eval$

\item[\rm3)]
$\jO\bigl(\pi_1(\DTC, \SfC)\bigr) \subset \ker\eval$.
\end{enumerate}
\end{proposition}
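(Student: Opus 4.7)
The plan is to construct $\eval$ as the composition of three transparent operations: point-evaluation at the base point $e=(0,0)$, the section $s$ from Eq.~\eqref{equ:sect_of_r}, and the isomorphism $\torInDiff$ from Eq.~\eqref{equ:iso_pi1T2_pi1DT2}. Specifically, I would introduce the evaluation map $\mathrm{ev}\colon \DT \to \Torus$, $\mathrm{ev}(h)=h(e)$. The trivial-action hypothesis gives $h(\curveMeridian)=\curveMeridian$ for every $h \in \Sf$, and since $e \in \curveMeridian$ this forces $\mathrm{ev}(\Sf) \subset \curveMeridian$; hence $\mathrm{ev}$ becomes a map of pairs $(\DT,\Sf)\to(\Torus,\curveMeridian)$ and induces a homomorphism
\[
\mathrm{ev}_*\colon \pi_1(\DT,\Sf) \longrightarrow \pi_1(\Torus,\curveMeridian).
\]
I then set $\eval = \torInDiff \circ s \circ \mathrm{ev}_*$. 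Because $\pi_1(\Torus,\curveMeridian)\cong\ZZZ$ is generated by $r[\cycleParallel]$ and $s(r[\cycleParallel])=[\cycleParallel]$, the image of $\torInDiff \circ s$ is exactly the cyclic subgroup $\subgroupParallel \subset \pi_1\DT$, so $\eval$ has the desired codomain.

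For properties (1) and (2) I would exploit the elementary identity $\mathrm{ev}\circ\torInDiff = \id_\Torus$, which holds pointwise because $\torInDiff(a,b)$ is the translation sending $e$ to $(a,b)$. This identity shows that on $\pi_1\DT$ the composition $\mathrm{ev}_*\circ\qhom$ coincides with $r\circ\torInDiff^{-1}$. Evaluating on a generator $[\flowParallel^n]=\torInDiff[\cycleParallel]^n$ of $\subgroupParallel$ yields $r[\cycleParallel]^n$, which $\torInDiff\circ s$ sends back to $[\flowParallel]^n$, establishing (1). Evaluating on $[\flowMeridian^n]\in \subgroupMeridian$ yields $r[\cycleMeridian]^n$, which is trivial in $\pi_1(\Torus,\curveMeridian)$ by the exactness of~\eqref{equ:exact_seq_T_C} since $[\cycleMeridian]$ lies in the image of $k\colon\pi_1\curveMeridian\to\pi_1\Torus$, establishing (2). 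Surjectivity of $\eval$ onto $\subgroupParallel$ follows for free from (1).

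For (3) the argument is almost immediate: any class in $\pi_1(\DTC,\SfC)$ is represented by a path $\omega\colon I \to \DTC$ of diffeomorphisms fixed \emph{pointwise} on $\curveMeridian$, hence $\omega(t)(e)=e$ for every $t$, so $\mathrm{ev}\circ\omega$ is the constant loop at $e$. Therefore $\mathrm{ev}_*\circ\jO=0$, and consequently $\eval\circ\jO=0$.

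The only genuinely substantive step will be the conceptual one in the first paragraph: recognising that the trivial-action hypothesis is precisely what allows $\mathrm{ev}$ to descend to a map of pairs. Without it, $\mathrm{ev}(\Sf)$ would spread across the whole level set $f^{-1}(c)$ rather than landing in the single component $\curveMeridian$, and the construction would collapse. Once the map of pairs is in place, the rest is a routine manipulation with the exact sequence~\eqref{equ:exact_seq_T_C} and with the identity $\mathrm{ev}\circ\torInDiff=\id_\Torus$.
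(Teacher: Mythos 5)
Your proposal is correct and follows essentially the same route as the paper: your evaluation map $\mathrm{ev}_*$ is precisely the paper's homomorphism $\unwind$, defined by $\unwind(\omega)=\omega_e$ with $\omega_e(t)=\omega(t)(e)$, and the paper likewise sets $\eval=\torInDiff\circ s\circ\unwind$ and verifies 1)--3) via the identity $\unwind\circ\qhom\circ\torInDiff=r$ together with exactness of~\eqref{equ:exact_seq_T_C}. Your observation that the trivial-action hypothesis is exactly what makes the evaluation a map of pairs $(\DT,\Sf)\to(\Torus,\curveMeridian)$ is also precisely where the paper invokes that hypothesis.
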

\begin{corollary}\label{cor:splitting_pi1DS}
\begin{enumerate}
\item[\rm a)]
The map $\theta: \ker\eval \times \subgroupParallel \longrightarrow \pi_1(\DT,\Sf)$ defined by
$\theta(\omega, l) = \omega \cdot \qhom(l)$ for $(\omega, l) \in \ker\eval \times \subgroupParallel$, is a groups isomorphism.

\item[\rm b)]
The following sequence is exact: \
$1 \longrightarrow \subgroupMeridian \xrightarrow{~~\qhom~~} \ker\eval \xrightarrow{~~\partial~~} \pi_0\Sf \longrightarrow 1$.
\end{enumerate}
\end{corollary}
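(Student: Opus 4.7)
The plan is to derive both parts directly from Proposition~\ref{pr:hom_onto_L} by combining it with two general facts already recorded earlier in the paper: the centrality of $\qhom(\pi_1\DT)$ in $\pi_1(\DT,\Sf)$ (noted just after~\eqref{equ:exact_seq_of_pair_DMX_SfX}), and the algebraic Lemma~\ref{lm:partial_splitting}. The underlying short exact sequence we shall use is the bottom row of~\eqref{equ:morphism_of_sequences}, which under the identification $\pi_1\DT \cong \subgroupParallel \times \subgroupMeridian$ from~\S\ref{sect:some_constructions} takes the form $1 \to \subgroupParallel \times \subgroupMeridian \xrightarrow{\qhom} \pi_1(\DT,\Sf) \xrightarrow{\partial} \pi_0\Sf \to 1$.

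For part (a), I would first check that $\theta$ is a homomorphism. Centrality of $\qhom(\pi_1\DT)$, and in particular of $\qhom(\subgroupParallel)$, lets one move $\qhom(l_1)$ past $\omega_2$:
\[
\theta(\omega_1,l_1)\,\theta(\omega_2,l_2)
\;=\; \omega_1\,\qhom(l_1)\,\omega_2\,\qhom(l_2)
\;=\; \omega_1\omega_2\,\qhom(l_1 l_2)
\;=\; \theta\bigl((\omega_1,l_1)(\omega_2,l_2)\bigr).
\]
For injectivity, suppose $\theta(\omega,l)=1$, so $\omega = \qhom(l)^{-1}$; applying $\eval$ and using Proposition~\ref{pr:hom_onto_L}(1) gives $1 = \eval(\omega) = l^{-1}$, hence $l=1$ and thus $\omega=1$. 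For surjectivity, given $t \in \pi_1(\DT,\Sf)$, set $l := \eval(t) \in \subgroupParallel$ and $\omega := t\cdot \qhom(l)^{-1}$; then $\eval(\omega) = \eval(t)\cdot l^{-1} = 1$, so $\omega \in \ker\eval$, and $\theta(\omega,l)=t$ by construction.

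Part (b) is then a direct application of Lemma~\ref{lm:partial_splitting} with $L := \subgroupParallel$, $M := \subgroupMeridian$, $T := \pi_1(\DT,\Sf)$, $S := \pi_0\Sf$, and $\eval$ the homomorphism from Proposition~\ref{pr:hom_onto_L}. The required short exact sequence is the one displayed above; the condition $\eval\circ\qhom = \id_{\subgroupParallel}$ on the $L$-factor is Proposition~\ref{pr:hom_onto_L}(1); and the inclusion $\qhom(1\times\subgroupMeridian)\subset\ker\eval$ is Proposition~\ref{pr:hom_onto_L}(2). The conclusion of Lemma~\ref{lm:partial_splitting} is exactly the exact sequence asserted in part (b).

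I do not anticipate a genuine obstacle: once Proposition~\ref{pr:hom_onto_L} is in hand, both statements are formal. The only subtle point is that the homomorphism property of $\theta$ relies essentially on the centrality of $\qhom(\pi_1\DT)$ in $\pi_1(\DT,\Sf)$ and not on any concrete commutation between $\ker\eval$ and $\qhom(\subgroupParallel)$; all the real content of the theorem is absorbed into the construction of $\eval$ in Proposition~\ref{pr:hom_onto_L}.
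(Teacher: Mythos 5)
Your proposal is correct and follows essentially the same route as the paper: part a) from Proposition~\ref{pr:hom_onto_L}(1) together with the centrality of $\qhom(\pi_1\DT)$ in $\pi_1(\DT,\Sf)$, and part b) by applying Lemma~\ref{lm:partial_splitting} to the lower row of~\eqref{equ:morphism_of_sequences} using Proposition~\ref{pr:hom_onto_L}(1) and (2). The paper leaves these verifications to the reader, and the details you supply (homomorphism, injectivity and surjectivity of $\theta$, and the identification of the data for Lemma~\ref{lm:partial_splitting}) are exactly the intended ones.
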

\begin{proof}
Statement a) follows from 1) of Proposition~\ref{pr:hom_onto_L} and the fact that $\qhom(\subgroupParallel)$ is contained in the center of $\pi_1(\DT,\Sf)$, see (3) of Theorem~\ref{th:main:pi1Of}.
Statement b) is a direct consequence of statements 1) and 2) of Proposition~\ref{pr:hom_onto_L} and Lemma~\ref{lm:partial_splitting} applied to the lower exact sequence of Eq.~\eqref{equ:morphism_of_sequences}.
We leave the details for the reader.
\end{proof}

Due to 2) and 3) of Proposition~\ref{pr:hom_onto_L} and 3) of Corollary~\ref{cor:splitting_pi1DS} we see that diagram Eq.~\eqref{equ:morphism_of_sequences} reduces to the following one:
\begin{equation}\label{equ:morphism_of_sequences_reduced}
\begin{CD}
1 @>{}>> 1 @>>> \pi_1(\DTC,\SfC) @>{\partial_{\curveMeridian}}>{\cong}> \pi_0\SfC @>{}>> 1 \\
&& @V{}VV @V{\jO}VV @V{\jZ}VV \\
1 @>{}>> \subgroupMeridian @>{\qhom}>> \ker\eval @>{\partial}>> \pi_0\Sf @>{}>> 1
\end{CD}
\end{equation}
Thus to complete Theorem~\ref{th:main:pi1Of} it suffices to prove that the middle vertical arrow, $\jO$, in Eq.~\eqref{equ:morphism_of_sequences_reduced} is an isomorphism between $\pi_1(\DTC, \SfC)$ and $\ker\eval$.
As $\subgroupParallel \cong \ZZZ \cong \pi_1 S^1$ we will then get the required isomorphism
\[
\pi_1(\DTC, \SfC) \times \ZZZ  \ \cong \ \ker\eval \times \subgroupParallel \ \xrightarrow{~~\theta~~} \ \pi_1(\DT,\Sf).
\]

To show that $\jO: \pi_1(\DTC, \SfC) \to \ker\eval$ is an isomorphism notice that $\partial_{\curveMeridian}$ is also an isomorphism by (4) of Theorem~\ref{th:fibration_DMX_Of}.
Therefore from the latter diagram Eq.~\eqref{equ:morphism_of_sequences_reduced} we get the following one:
\[
\begin{CD}
1 @>{}>> \ker \jZ @>>> \pi_0\SfC @>{\jZ}>> \pi_0\Sf  \\
&& @V{}VV @V{\jO \circ \partial_{\curveMeridian}^{-1}}VV @| \\
1 @>{}>> \qhom(\subgroupMeridian) @>>> \ker\eval @>{\partial}>> \pi_0\Sf @>{}>> 1
\end{CD}
\]
\begin{proposition}\label{pr:j0_properties}
Homomorphism $\jZ:\pi_0\SfC \longrightarrow \pi_0\Sf$ is surjective, and the induced map \[ \jO \circ \partial_{\curveMeridian}^{-1}:\ker \jZ \longrightarrow q(\subgroupMeridian)\] is an isomorphism.
\end{proposition}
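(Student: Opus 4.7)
The plan is to establish the two claims separately by direct constructions, leaning on Lemma~\ref{lm:shift_functions} (which parametrises $\Sidf$ as $\{\flow_\alpha\}$) and the compatibility $\flow|_{J_\eps\times S^1}=\flowMeridian|_{J_\eps\times S^1}$, which turns ``shifts along $\flow$'' near $\curveMeridian$ into ordinary meridional shifts.

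\emph{Surjectivity of $\jZ$.} Given $h\in\Sf$, the triviality hypothesis forces $h(\curveMeridian)=\curveMeridian$; since $h$ is isotopic to $\id_{\Torus}$ it acts trivially on $H_1(\Torus;\ZZZ)$, so $h|_{\curveMeridian}$ is orientation-preserving. Write $h(0,y)=(0,\phi(y))$ and lift $\phi$ to $\tilde\phi:\RRR\to\RRR$ with $\tilde\phi(y+1)=\tilde\phi(y)+1$; then $\alpha_0(y):=\tilde\phi(y)-y$ descends to a smooth map $S^1\to\RRR$ with $\alpha_0'>-1$. Extend to the torus by $\alpha(x,y):=\alpha_0(y)\rho(x)$, where $\rho$ is a smooth bump supported in $J_\eps$ with $\rho(0)=1$. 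Because $\partial_y\alpha>-1$ everywhere and $\alpha$ is supported where $\flow=\flowMeridian$, the map $g:=\flow_\alpha$ is a global diffeomorphism of $\Torus$. By Lemma~\ref{lm:shift_functions}, $g\in\Sidf$, and $g|_{\curveMeridian}=h|_{\curveMeridian}$ by construction, whence $h\circ g^{-1}\in\SfC$ with $\jZ[h\circ g^{-1}]=[h]$.

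\emph{Description of $\ker\jZ$.} A class $[h']\in\ker\jZ$ corresponds to $h'\in\SfC\cap\Sidf$; by Lemma~\ref{lm:shift_functions}, $h'=\flow_\alpha$ for a unique $\alpha$, and $h'|_{\curveMeridian}=\id$ together with $\flow=\flowMeridian$ on $\curveMeridian\times\RRR$ forces $\alpha(0,y)\in\ZZZ$ for every $y$, hence $\alpha|_{\curveMeridian}\equiv n\in\ZZZ$. Let $\flow^n$ denote the time-$n$ map. Then $\flow^n\in\SfC$ and $(\flow^n)^{-1}\circ h'=\flow_{\alpha-n}$ has $(\alpha-n)|_{\curveMeridian}=0$, so the isotopy $\{\flow_{s(\alpha-n)}\}_{s\in[0,1]}$ connects it to $\id$ inside $\SfC$. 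Thus $[h']=[\flow^n]$ in $\pi_0\SfC$, and $\ker\jZ\cong\ZZZ$ via $[h']\mapsto n$.

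\emph{Identification of the image.} Form the path $\omega_t:=\flow_{t\alpha}$ in $\DT$ from $\id$ to $h'$. Because $f$ is constant on $\flow$-orbits, each $\omega_t$ preserves $f$, so $\omega$ lies entirely in $\Sidf\subset\Sf$, and the retraction $H(s,t):=\omega(st)$ shows $[\omega]=0$ in $\pi_1(\DT,\Sf)$. The twisted path $\tilde\omega(t):=\omega_t\circ\flowMeridian^{-nt}$ satisfies $\tilde\omega(t)|_{\curveMeridian}=\id$ (since $\alpha|_{\curveMeridian}=n$ and $\flow=\flowMeridian$ there), so it lies in $\DTC$, runs from $\id$ to $h'$, and represents $\partial_\curveMeridian^{-1}[h']$. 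Using the group structure on $\pi_1(\DT,\Sf)$ coming from pointwise composition in the topological group $\DT$,
\[
\jO\circ\partial_\curveMeridian^{-1}[h']\;=\;[\tilde\omega]\;=\;[\omega]\cdot q[\flowMeridian^{-n}]\;=\;q[\flowMeridian]^{-n},
\]
so the map $\ker\jZ\to q(\subgroupMeridian)$ is the isomorphism $n\mapsto -n$ after identifying both sides with $\ZZZ$. The main obstacle I expect is the diffeomorphism check in the surjectivity step: verifying that the extended shift function $\alpha=\alpha_0\rho$ really produces a diffeomorphism via $\flow_\alpha$ rests on preserving the estimate $\partial_y\alpha>-1$ under the cutoff, and on keeping the support inside the region where $\flow=\flowMeridian$; the final identification $[\tilde\omega]=q[\flowMeridian]^{-n}$ is then an immediate consequence of the Lie-group structure on $\DT$ and the triviality of $[\omega]$.
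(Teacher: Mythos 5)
Your strategy coincides with the paper's in its essential mechanism: parametrise $f$-preserving diffeomorphisms isotopic to the identity as shifts $\flow_\alpha$ along the flow $\flow$ (Lemma~\ref{lm:shift_functions}), read off the integer value $\alpha|_{\curveMeridian}$ as the invariant classifying $\ker\jZ$, and identify the corresponding relative class with a power of $\qhom[\flowMeridian]$. Two of your steps are executed differently from the paper, and both variants are legitimate: your surjectivity argument constructs the correcting diffeomorphism $g=\flow_{\alpha_0\rho}$ explicitly (the paper merely asserts that $h$ is $f$-isotopic to a diffeomorphism fixed on $\curveMeridian$), and your identity $[\tilde\omega]=[\omega]\cdot\qhom[\flowMeridian]^{-n}$, obtained from pointwise multiplication in the topological group $\DT$, replaces the paper's explicit two-parameter homotopy $\mathbf{H}(t,s)=\flowMeridian_{t(s\sigma+1)}$.

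Nevertheless there are two genuine gaps. First, you conclude $h\circ g^{-1}\in\SfC$ (and later $\flow^n\in\SfC$) from the fact that these diffeomorphisms preserve $f$, are fixed on $\curveMeridian$, and are isotopic to $\id_{\Torus}$. But $\SfC=\Stabf\cap\DiffIdTC$ demands an isotopy to the identity that is \emph{fixed on} $\curveMeridian$, and the equality $\DiffIdT\cap\DiffTC=\DiffIdTC$ is not a formality: it is Lemma~\ref{lm:h_C_id__h_in_DidTC}, proved by cutting $\Torus$ along $\curveMeridian$ and using $\pi_0\Diff(Q,\partial Q)\cong\ZZZ$; you must invoke it at both places. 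Second, the claim that $\ker\jZ\cong\ZZZ$ via $[h']\mapsto n$ is not justified where you state it: you prove that $n=0$ forces $[h']=[\id]$, but not the converse well-definedness, namely that $n$ is constant on path components of $\SfC$, i.e.\ that an $f$-preserving isotopy fixed on $\curveMeridian$ cannot change $n$. The paper establishes this in the last step of the proof of Lemma~\ref{lm:ker_j0} using the fact that the shift functions $\delta_t$ of an $f$-preserving isotopy depend continuously on $t$ (\cite[Theorem~25]{Maksymenko:TA:2003}), so that $\delta_t|_{\curveMeridian}$ is a continuous integer-valued, hence constant, function of $t$. In your write-up this can be recovered a posteriori from your third step together with the injectivity of $\qhom$ in~\eqref{equ:pi1Of_exact_sequence} (if $[h']=[\id]$ then $\qhom[\flowMeridian]^{-n}=1$, whence $n=0$), but then the logical order must be rearranged, since as written your second step asserts a fact that only becomes available after the third.
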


In other words, Proposition~\ref{pr:j0_properties} claims that we have the following morphism between short exact sequences:
\[
\begin{CD}
1 @>{}>> \ker \jZ @>>> \pi_0\SfC @>{\jZ}>> \pi_0\Sf  @>{}>> 1 \\
&& @V{\cong}VV @V{\jO \circ \partial_{\curveMeridian}^{-1}}VV @| \\
1 @>{}>> \qhom(\subgroupMeridian) @>>> \ker\eval @>{\partial}>> \pi_0\Sf @>{}>> 1
\end{CD}
\]
Since left and right vertical arrows are isomorphisms, it will follow from five lemma, \cite[\S~2.1]{Hatcher:AlgTop:2002}, that $\jO \circ \partial_{\curveMeridian}^{-1}$ is an isomorphism as well.
This completes Theorem~\ref{th:main:pi1Of} modulo Propositions~\ref{pr:hom_onto_L} and~\ref{pr:j0_properties}.
The next two sections are devoted to the proof of those propositions.

\begin{remark}\rm
It easily follows from statement a) of Corollary~\ref{cor:splitting_pi1DS} that the map $\kappa:\OfC\times S^1 \to \Of$ defined by:
\begin{equation}\label{equ:equivalence}
\kappa(g,t) = g\circ \flowParallel_t
\end{equation}
is a homotopy equivalence.
We leave the details for the reader.
\end{remark}

\section{Proof of Proposition~\ref{pr:hom_onto_L}}
Existence of $\eval$ is guaranteed by statement (e) of the following lemma.
\begin{lemma}\label{lm:comm_diagramm}
Suppose $\Sf$ trivially acts of $\curveMeridian$.
Then there is a commutative diagram:
\begin{equation}\label{equ:comm_diagr}
\xymatrix{
                                 && \pi_1\DT \ar[d]_{\qhom}              && \pi_1\Torus \ar[ll]_-{\torInDiff}^-{\cong}  \ar@/^/[d]^-{r} \\
\pi_1(\DTC, \SfC) \ar[rr]^-{\jO} && \pi_1(\DT, \Sf) \ar[rr]^-{\unwind}  && \pi_1(\Torus, \curveMeridian) \ar@/^/[u]^-{s} \cong \ZZZ
}
\end{equation}
in which 
\begin{enumerate}
\item[\rm(a)]
$r\circ s$ is the identity isomorphism of $\pi_1(\Torus, \curveMeridian)$;
\item[\rm(b)]
$\torInDiff \circ s$ is an isomorphism of $\pi_1(\Torus,\curveMeridian)$ onto $\subgroupParallel$;
\item[\rm(c)]
$\unwind$ is surjective;
\item[\rm(d)]
$\qhom(\subgroupMeridian) \subset \ker\unwind$;
\item[\rm(e)]
$\jO(\pi_1(\DTC, \SfC)) \ \subset \ \ker\unwind$.
\item[\rm(f)]
The following composition 
\[
\eval = \torInDiff \circ s \circ \unwind: \pi_1(\DTC, \SfC) \longrightarrow \subgroupParallel
\]
satisfies the statement of Proposition~\ref{pr:hom_onto_L}.
\end{enumerate}
\end{lemma}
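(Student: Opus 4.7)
The plan is to build $\unwind$ as an \emph{evaluation at the basepoint} $e=(0,0)$, and then verify each item (a)--(f) by direct inspection.

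\textbf{Construction of $\unwind$.} For a relative loop $\omega:(I,\partial I,0)\to(\DT,\Sf,\id_{\Torus})$, define a path in $\Torus$ by $\eval_e(\omega)(t):=\omega(t)(e)$. Then $\eval_e(\omega)(0)=e$. The key use of the hypothesis that $\Sf$ acts trivially on $\curveMeridian$ is that $\omega(1)\in\Sf$ satisfies $\omega(1)(\curveMeridian)=\curveMeridian$, and since $e\in\curveMeridian$, the endpoint $\eval_e(\omega)(1)$ lies in $\curveMeridian$. Hence $\eval_e(\omega)$ represents a class in $\pi_1(\Torus,\curveMeridian)$, and I set $\unwind[\omega]:=[\eval_e(\omega)]$. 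A relative homotopy $H:I\times I\to\DT$ evaluated at $e$ yields a homotopy of paths whose right endpoints remain in $\curveMeridian$ for the same reason, so $\unwind$ is well defined; concatenation commutes with evaluation (because $\omega_2(0)=\id$), so $\unwind$ is a homomorphism.

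\textbf{Verification of (a)--(e).} Item (a) is exactly the defining property of the section $s$ constructed via~\eqref{equ:sect_of_r}. For (b), recall that $\pi_1(\Torus,\curveMeridian)\cong\ZZZ$ is generated by $r[\cycleParallel]$, so $s$ carries it to $[\cycleParallel]\in\pi_1\Torus$, which $\torInDiff$ sends to $[\flowParallel]$, a generator of $\subgroupParallel$; both $s$ and $\torInDiff$ being injective makes the composition an isomorphism onto $\subgroupParallel$. For (c), the loop $t\mapsto\flowParallel_t$ lies in $\DT$ with $\flowParallel_1=\id_{\Torus}\in\Sf$, so it defines a class in $\pi_1(\DT,\Sf)$ via $\qhom$, and $\eval_e(\flowParallel)(t)=(t,0)=\cycleParallel(t)$, whence $\unwind\,\qhom[\flowParallel]=r[\cycleParallel]$ is the generator; thus $\unwind$ is surjective. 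For (d), a generator of $\subgroupMeridian$ is $\flowMeridian$, and $\eval_e(\flowMeridian)(t)=(0,t)=\cycleMeridian(t)$ is a loop entirely inside $\curveMeridian$; by the exact sequence~\eqref{equ:exact_seq_T_C}, any such loop projects to the trivial element of $\pi_1(\Torus,\curveMeridian)$ under $r$, so $\unwind\,\qhom[\flowMeridian]=1$. For (e), any $\omega\in\pi_1(\DTC,\SfC)$ is represented by a path of diffeomorphisms fixed pointwise on $\curveMeridian$; in particular $\omega(t)(e)=e$ for every $t$, so $\eval_e(\omega)$ is the constant loop at $e$ and $\unwind\,\jO[\omega]=1$.

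\textbf{Verification of (f).} Set $\eval=\torInDiff\circ s\circ\unwind$. Condition 1) of Proposition~\ref{pr:hom_onto_L} is the chain $[\flowParallel]\xmapsto{\qhom}\qhom[\flowParallel]\xmapsto{\unwind}r[\cycleParallel]\xmapsto{s}[\cycleParallel]\xmapsto{\torInDiff}[\flowParallel]$ worked out in (c), combined with (a) and (b); since $\subgroupParallel$ is free cyclic, identity on the generator implies $\eval\circ\qhom=\id_{\subgroupParallel}$. Conditions 2) and 3) follow immediately from (d) and (e).

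\textbf{Main obstacle.} There is no deep step: the whole construction is organised around the evaluation map at $e$, and every verification reduces to a short computation using the explicit form~\eqref{equ:LM_flows} of $\flowParallel$ and $\flowMeridian$ together with the fact that $e\in\curveMeridian$. The one point requiring care—and the only place where the standing hypothesis is used—is showing that $\unwind$ is well defined: without the triviality of the $\Sf$-action on $\curveMeridian$, the endpoint $\omega(1)(e)$ need not lie in $\curveMeridian$, and the right endpoint in a relative homotopy could leave $\curveMeridian$, so the evaluation would not land in $\pi_1(\Torus,\curveMeridian)$. Once this is settled, the rest is bookkeeping.
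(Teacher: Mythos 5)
Your proposal is correct and follows essentially the same route as the paper: $\unwind$ is defined as evaluation at the basepoint $e$, the triviality of the $\Sf$-action on $\curveMeridian$ is used exactly where you say (to ensure $\omega(1)(e)\in\curveMeridian$ and hence well-definedness), and items (a)--(f) are verified by the same explicit computations with $\cycleParallel$, $\cycleMeridian$, $\flowParallel$, $\flowMeridian$. The only cosmetic difference is that the paper checks commutativity of the square $\unwind\circ\qhom\circ\torInDiff=r$ on an arbitrary loop $\omega=(\alpha,\beta)$ rather than just on the generators, but this is equivalent to your verification.
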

\begin{proof}
We need only to define the map $\unwind$, since $\qhom$ appears in Eq.~\eqref{equ:exact_seq_of_pair_DMX_SfX}, and  $r$, $s$, and $\torInDiff$ are described in \S\ref{sect:some_constructions}.

Let $\omega:(I, \partial I, 0) \longrightarrow (\DT, \Sf, \id_{\Torus})$ be a continuous map.
In particular, $\omega(1)$ belongs to $\Sf$.
By assumption $\Sf$ trivially acts on $\curveMeridian$, whence $\omega(1)(\curveMeridian) = \curveMeridian$.
Consider the following path
\[
\omega_{e}: I \to \Torus,
\qquad
\omega_{e}(t) = \omega(t)(e).
\]
Then $\omega_e(0) = e$ and $\omega_e(1) \in \curveMeridian$.
Therefore $\omega_e \in C\bigl( (I, \partial I, 0), (\Torus,\curveMeridian, e)$ and we put by definition
\[ \unwind(\omega) = \omega_e. \]
The map $\unwind:\pi_1(\DT,\Sf) \to \pi_1(\Torus,\curveMeridian)$ in Eq.~\eqref{equ:comm_diagr} is the induced mapping of the corresponding sets of homotopy classes.
It is easy to verify that $\unwind$ is in fact a group homomorphism.

\smallskip

{\bf Commutativity of diagram Eq.~\eqref{lm:comm_diagramm}.}
Notice that the groups in right rectangle of Eq.~\eqref{lm:comm_diagramm} are just the sets of path components of the corresponding spaces from the following diagram:
\begin{equation}\label{equ:diagram_maps_of_pairs}
\xymatrix{
C\bigl( (I, \partial I), (\DT, \id_{\Torus}) \bigr) \ar[d]_{\qhom}               && C\bigl( (I, \partial I), (\Torus,e) \bigr) \ar[ll]_-{\torInDiff}  \ar[d]^{r} \\
C\bigl( (I, \partial I, 0), (\DT, \Sf, \id_{\Torus}) \bigr) \ar[rr]^-{\unwind}  &&  C\bigl( (I, \partial I, 0), (\Torus,\curveMeridian, e) 
}
\end{equation}
Notice that the maps $\qhom$ and $r$ here are just natural inclusions.
It suffices to prove commutativity of diagram~\eqref{equ:diagram_maps_of_pairs}.

Let $\omega=(\alpha,\beta):(I,\partial I) \longrightarrow (\Torus,e)$ be a representative of some loop in $\pi_1\Torus$, where $\alpha$ and $\beta$ are coordinate functions of $\omega$.
Then 
\[
\qhom\circ \torInDiff(\omega)(t)(x,y) = 
\bigl(x+\alpha(t) \ \mathrm{mod} \ 1, \ \ y+\beta(t) \ \mathrm{mod} \ 1\bigr).
\]
whence
\[
\unwind \circ q\circ \torInDiff(\omega)(t) =
\qhom\circ \torInDiff(\omega)(t)(0,0)  = 
\bigl(\alpha(t), \beta(t) \bigr)  = \omega(t).
\]
But $r(\omega)(t) = \omega(t)$ as well, whence $\unwind \circ \qhom\circ \torInDiff(\omega) = r(\omega)$.
Thus diagram Eq.~\eqref{lm:comm_diagramm} is commutative.

{\bf Property (a)} is already established, see remark just after Eq.~\eqref{equ:sect_of_r}.

\smallskip

{\bf Property (b).}
Since $\torInDiff\circ s ( r[\cycleParallel]) = \torInDiff[\cycleParallel] = \flowParallel$ and $\subgroupParallel$ is freely generated by $\flowParallel$, it follows that $\torInDiff\circ s$ isomorphically maps $\pi_1(\Torus,\curveMeridian)$ onto $\subgroupParallel$.

\smallskip

{\bf Property (c).}
By commutativity of Eq.~\eqref{equ:comm_diagr} we have that $r[\cycleParallel] = \unwind \circ \qhom \circ \torInDiff[\cycleParallel]$.
But $r[\cycleParallel]$ generates $\pi_1(\Torus,\cycleParallel)$, whence $\unwind$ is surjective.

\smallskip

{\bf Property (d).}
As $\flowMeridian$ generates $\subgroupMeridian$, it suffices to show that $\unwind \circ \qhom(\flowMeridian) = 0$.
We have that $[\cycleMeridian] \in k(\pi_1\curveMeridian)$.
Therefore it follows from exactness of sequence~\eqref{equ:exact_seq_T_C} that $r[\cycleMeridian] = 0 \in \pi_1(\Torus,\curveMeridian)$.
Hence
\[
\unwind \circ \qhom[\flowMeridian] = \unwind \circ \qhom\circ \torInDiff [\cycleMeridian] = r[\cycleMeridian] = 0.
\]
Thus $\qhom(\subgroupMeridian) \subset \ker\eval$;

\smallskip

{\bf Property (e).}
Let $\alpha \in \pi_1(\DTC, \SfC)$ and $\omega:(I,\partial I, 0) \longrightarrow (\DTC, \SfC, \id_{\Torus})$ a path representing $\alpha$.
Then $\jO(\alpha)$ is represented by the homotopy class of the map \[ i\circ\omega:(I,\partial I, 0) \longrightarrow (\DT, \Sf, \id_{\Torus}).\]
By assumption $\omega(t)\in\DTC$, i.e. it is fixed on $\curveMeridian$ for all $t\in I$.
In particular, since $e\in\curveMeridian$, we get that
\[
\unwind(\omega)(t) = \omega_e(t) = \omega(t)(e) = e.
\]
Thus $\unwind(\omega): I \to \Torus$ is a constant map, and so it represents a unit element of $\pi_1(\Torus,\curveParallel)$.
Therefore $\jO(\alpha)\in\ker\unwind$.

\smallskip

{\bf Property (f).}
By (b) and (c) $\torInDiff\circ s$ is an isomorphism, and $\unwind$ is surjective.
Hence $\eval = \torInDiff \circ s \circ \unwind$ is surjective as well, and $\ker\eval = \ker\unwind$.
Therefore statements 2) and 3) of Proposition~\ref{pr:hom_onto_L} follow from (d) and (e) respectively.

To prove 1) notice that
\[ 
\eval \circ \qhom (\flowParallel) 
= (\torInDiff \circ s \circ \unwind) \circ \qhom \bigl(\torInDiff [\cycleParallel]\bigr) 
= \torInDiff \circ s \circ (\unwind \circ \qhom \circ \torInDiff) [\cycleParallel]
= \torInDiff \circ s \circ r [\cycleParallel] =  \torInDiff[\cycleParallel] = \flowParallel.
\]
Hence $\eval\circ \qhom = \id_{\subgroupParallel}$.
Lemma~\ref{lm:comm_diagramm} and Proposition~\ref{pr:hom_onto_L} are completed.
\end{proof}

\section{Proof of Proposition~\ref{pr:j0_properties}}
\subsection{Image of $\jZ$}
Surjectivity of $\jZ$ is guaranteed by the following lemma.
\begin{lemma}\label{pr:im_j0}
Suppose $\Sf$ trivially acts on $\curveMeridian$, i.e. $h(\curveMeridian) = \curveMeridian$ for all $h\in\Sf$.
Then the induced homomorphism $\jZ:\pi_0\SfC\to\pi_0\Sf$ is surjective.
\end{lemma}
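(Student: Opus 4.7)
The plan is to produce, for any $h \in \Sf$, a path in $\Sf$ from $h$ to an element $h_1 \in \Sf$ satisfying $h_1|_\curveMeridian = \id_\curveMeridian$. Such an $h_1$ automatically lies in $\DiffIdT \cap \DiffTC = \DiffIdTC$ by Lemma~\ref{lm:h_C_id__h_in_DidTC}, hence in $\Stab(f) \cap \DiffIdTC = \SfC$, so $\jZ[h_1] = [h]$ in $\pi_0\Sf$ and surjectivity follows.

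By the triviality hypothesis $h(\curveMeridian) = \curveMeridian$, so writing $h|_\curveMeridian(0,y) = (0, h_0(y))$ gives an orientation-preserving degree-$1$ diffeomorphism $h_0$ of $S^1 \cong \curveMeridian$ (the degree is forced by $h$ being isotopic to $\id_\Torus$, so acting trivially on $\pi_1\Torus$). Lifting to $\tilde h_0 \colon \RRR \to \RRR$ with $\tilde h_0(y+1) = \tilde h_0(y) + 1$, I set $\gamma(y) := \tilde h_0^{-1}(y) - y$, which is $1$-periodic and therefore descends to a smooth function on $\curveMeridian$. I extend it to a shift function $\alpha(x,y) = \rho(x)\gamma(y)$ on $\Torus$, where $\rho$ is a bump with $\rho(0) = 1$ and $\supp\rho \subset J_\eps$. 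Inside $J_\eps \times S^1$ the flow $\flow$ coincides with $\flowMeridian$ by Eq.~\eqref{equ:relation_flowMer_flowHam}, so a direct computation gives
\[
\flow_\alpha\bigl(h(0,y)\bigr) = \flow_\alpha(0, h_0(y)) = \bigl(0, h_0(y) + \gamma(h_0(y))\bigr) = (0, y),
\]
so $h_1 := \flow_\alpha \circ h$ equals the identity on $\curveMeridian$. Invoking the lemma on smooth shifts in \S\ref{sect:prelim}, the family $h_t := \flow_{t\alpha} \circ h$ is an isotopy in $\Stab(f)$ from $h$ to $h_1$; since $h \in \DiffIdT$ and $\{h_t\}$ is an isotopy, each $h_t$ is isotopic to $h$ and thus $h_t \in \DiffIdT$, giving $h_t \in \Sf$ throughout.

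The only nontrivial step is checking that $\flow_\alpha$ is actually a diffeomorphism so that the smooth-shift lemma applies. Inside $\supp\rho \times S^1$ the map $\flow_\alpha$ preserves each vertical circle $\{x\} \times S^1$, and its $y$-derivative there equals
\[
1 + \rho(x)\gamma'(y) = \bigl(1-\rho(x)\bigr) + \rho(x)\,(\tilde h_0^{-1})'(y),
\]
a convex combination of $1$ and a strictly positive number, hence itself strictly positive. Therefore $\flow_\alpha$ restricts to an orientation-preserving diffeomorphism of each such circle and is the identity outside the tubular neighborhood, so it is a global diffeomorphism of $\Torus$. This positivity estimate is essentially the only ingredient beyond bookkeeping; in particular, it is the key reason why the degree-$1$ condition on $h_0$ is exactly what is needed.
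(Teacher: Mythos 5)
Your proof is correct and follows the same strategy as the paper's: isotope $h$ inside $\Sf$ to a diffeomorphism fixed on $\curveMeridian$, then apply Lemma~\ref{lm:h_C_id__h_in_DidTC} to conclude that this diffeomorphism lies in $\SfC$. The only difference is that the paper merely asserts the existence of such an isotopy, whereas you construct it explicitly as $\flow_{t\alpha}\circ h$ with $\alpha(x,y)=\rho(x)\gamma(y)$, and your convex-combination positivity check that $\flow_{\alpha}$ is a diffeomorphism (together with the degree-$1$ observation forced by $h\in\DiffIdT$) correctly fills in the detail the paper leaves to the reader.
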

\begin{proof}
We should prove that each $h\in\Sf\equiv\Stabf \cap\DiffIdT$ is isotopic in $\Sf$ to a diffeomorphism $g$ belonging to $\SfC\equiv \Stabf \cap \DiffIdTC$.

Indeed, by assumption $h(\curveMeridian)=\curveMeridian$.
Hence $h$ is isotopic in $\Sf$ to a diffeomorphism $g$ fixed on $\curveMeridian$, i.e. $g\in\Sf \cap \DiffTC$.
But due to Lemma~\ref{lm:h_C_id__h_in_DidTC}
\[
\Sf \cap \DiffTC =
\Stabf \cap \DiffIdT \cap \DiffTC \stackrel{\eqref{equ:DiffIdT_cap_DiffTC__DiffIdTC}}{=\!=\!=\!=} 
\Stabf \cap \DiffIdTC = \SfC,
\]
whence $g\in \SfC$.
Thus $\jZ$ is epimorphism.
\end{proof}

\subsection{Kernel of $\jZ$}\label{subsec:proof:pr:ker_j0}
Notice that the kernel of $\jZ:\pi_0\SfC\to\pi_0\Sf$ consists of isotopy classes of diffeomorphisms in $\SfC$ isotopic to $\id_{\Torus}$ by $f$-preserving isotopy, however such an isotopy should not necessarily be fixed on $\curveMeridian$.
In other words, if we denote 
\[
 \kerjo \ := \ \Sidf \cap\DTC \ = \
 \StabIdf \ \cap \ \Diff(\Torus,\curveMeridian),
\]
then
\begin{equation}\label{equ:kerj0__pi0_StabIdf_DiffIdTC}
\ker\jZ \ = \ \pi_0 \kerjo. 
\end{equation}
Also notice that $\SidfC$ is the identity path component of $\kerjo$, whence
\[
\ker \jZ \ = \ \pi_0\kerjo \ = \ \kerjo /\SidfC. 
\]

For the proof of Proposition~\ref{pr:j0_properties} we will first establish in Lemma~\ref{lm:ker_j0} that $\ker\jZ\cong \ZZZ$ and then show in Lemma~\ref{lm:iso_kerjZ_M} that $\jO \circ \partial_{\curveMeridian}^{-1}$ yields an isomorphism of $\ker\jZ$ onto $\subgroupMeridian$.

Since $\kerjo \ := \ \Sidf \cap \DTC  \ \subset \ \Sidf$, it follows from Lemma~\ref{lm:shift_functions} that for every $h\in\kerjo$ there exists a unique smooth function $\delta\in C^{\infty} (\Torus)$ such that $h = \flow_{\delta}$.

\begin{lemma}\label{lm:ker_j0}
Suppose the flow $\flow$ satisfies the relation Eq.~\eqref{equ:relation_flowMer_flowHam}.
Then for each $h= \flow_{\delta} \in \kerjo$ the function $\delta$ is constant on $\curveMeridian$ and its value on $\curveMeridian$ is integer.
Define a map $\eta: \kerjo \longrightarrow \ZZZ$ by
\[
\eta(h) = \delta|_{\curveMeridian},
\]
for $h = \flow_{\delta} \in \kerjo$.
Then $\eta$ is a surjective homomorphism with $\ker\eta = \SidfC$.
In particular $\eta$ yields an isomorphism $\ker \jZ = \pi_0\kerjo = \kerjo /\SidfC \cong \ZZZ.$
\end{lemma}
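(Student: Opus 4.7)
The plan is to exploit the representation $h = \flow_{\delta}$ from Lemma~\ref{lm:shift_functions} to attach a single integer invariant $\eta(h)$ to each $h\in\kerjo$, and then show that this invariant distinguishes the path components of $\kerjo$, with $\ker\eta$ being precisely the identity component $\SidfC$.

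First I would prove that $\delta|_{\curveMeridian}$ is a constant integer. Since $h\in\DTC$ fixes $\curveMeridian$ pointwise, the identity $\flow(z,\delta(z)) = z$ holds for every $z\in\curveMeridian$, so $\delta(z)$ must be a period of the $\flow$-orbit through $z$. By Eq.~\eqref{equ:relation_flowMer_flowHam}, $\flow$ agrees with $\flowMeridian$ on $J_{\eps}\times S^1$, and $\flowMeridian$-orbits inside $\curveMeridian = 0\times S^1$ have minimal period $1$; combined with connectedness of $\curveMeridian$ and smoothness of $\delta$, this forces $\delta|_{\curveMeridian}$ to be a single integer. The homomorphism property of $\eta$ then follows from the product rule for shifts, $(h_1 h_2)(z) = \flow(z,\, \delta_2(z) + \delta_1(h_2(z)))$, which on $\curveMeridian$ (where $h_2 = \id_{\Torus}$) reduces to the sum $\delta_1|_{\curveMeridian} + \delta_2|_{\curveMeridian}$. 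For surjectivity I would take the constant shift function $\delta\equiv n$: the map $\flow_n$ preserves $f$, is connected to $\id_{\Torus}$ by the $f$-preserving isotopy $\{\flow_{tn}\}$, and is fixed on $\curveMeridian$ since $\flow|_{\curveMeridian}$ has period $1$; hence by Lemma~\ref{lm:h_C_id__h_in_DidTC} it belongs to $\DiffIdTC = \DTC$, so $\flow_n\in\kerjo$ with $\eta(\flow_n) = n$.

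The heart of the proof is the identification $\ker\eta = \SidfC$. The inclusion $\supseteq$ is straightforward: if $\eta(h)=0$ then the canonical isotopy $\{\flow_{t\delta}\}$ supplied by Lemma~\ref{lm:shift_functions} both preserves $f$ and fixes $\curveMeridian$ pointwise (because on $\curveMeridian$ one has $\flow_{t\delta}(z) = \flow(z,0) = z$), so it lies in $\Stab(f,\curveMeridian)$ and witnesses $h\in\SidfC$. For the reverse inclusion, given $h\in\SidfC$ I would pick any isotopy $\{h_t\}\subset\Stab(f,\curveMeridian)$ from $\id_{\Torus}$ to $h$, write $h_t = \flow_{\delta_t}$ via Lemma~\ref{lm:shift_functions}, and note that $n_t := \delta_t|_{\curveMeridian}\in\ZZZ$ for every $t$ by the first step. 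Since near $\curveMeridian$ the flow $\flow$ coincides with the simple rotation $\flowMeridian$, the correspondence $h_t\leftrightarrow\delta_t$ is continuous in $t$, so $t\mapsto n_t$ is a continuous $\ZZZ$-valued function on $[0,1]$ with $n_0 = 0$; discreteness then forces $n_t \equiv 0$ and hence $\eta(h) = n_1 = 0$. The only delicate step is this continuous-dependence claim, which is why the explicit tubular description~\eqref{equ:relation_flowMer_flowHam} near $\curveMeridian$ is essential. Once $\ker\eta = \SidfC$ is established, the identification $\ker\jZ = \kerjo/\SidfC$ recorded just before the lemma, together with the surjectivity of $\eta$, yields the isomorphism $\ker\jZ \cong \ZZZ$.
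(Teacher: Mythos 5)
Your argument is correct and follows the paper's proof almost step for step: the integrality and constancy of $\delta|_{\curveMeridian}$ via the period-$1$ relation \eqref{equ:relation_flowMer_flowHam}, the homomorphism property via the composition rule $h_1\circ h_0=\flow_{\delta_1\circ h_0+\delta_0}$, and the identification $\ker\eta=\SidfC$ using the canonical isotopy $\flow_{t\delta}$ in one direction and the continuous dependence of $\delta_t|_{\curveMeridian}$ on $t$ in the other (the paper covers that last, genuinely delicate point by citing \cite[Theorem~25]{Maksymenko:TA:2003}; your heuristic justification via \eqref{equ:relation_flowMer_flowHam} is the right idea but would need that reference or an explicit argument to be airtight). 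The one genuine difference is the surjectivity witness: you use the time-$n$ map $\flow_n$ of the flow with the constant shift function $\delta\equiv n$, whereas the paper constructs a diffeomorphism $g=\flow_{\sigma}$ with $\sigma$ a bump function supported in the collar $J_{\eps}\times S^1$ and equal to $-1$ near $\curveMeridian$. Your choice is simpler and perfectly adequate for this lemma (by uniqueness of the shift function, $\eta(\flow_n)=n$, and Lemma~\ref{lm:h_C_id__h_in_DidTC} places $\flow_n$ in $\DTC$). The paper's localized generator is chosen with an eye to Lemma~\ref{lm:iso_kerjZ_M}, where one needs an isotopy from $\id_{\Torus}$ to the generator that is fixed on $\curveMeridian$ and explicitly comparable to the loop $\flowMeridian_t$; the obvious isotopy $\flow_{tn}$ for your $\flow_n$ is not fixed on $\curveMeridian$, so the later argument would require either switching to the paper's $g$ or an extra step showing the two generators agree modulo $\SidfC$.
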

\begin{proof}
We will regard $\flowMeridian$ as a flow on $\Torus$.
Notice that $\curveMeridian$ is a closed trajectory of both flows $\flow$ and $\flowMeridian$ and its period with respect to $\flowMeridian$ is equal to $1$.
Therefore the relation Eq.~\eqref{equ:relation_flowMer_flowHam} implies that the period of $\curveMeridian$ with respect to $\flow$ also equals $1$.

Since $h\in\kerjo =  \Sidf \cap \DTC$ is fixed on $\curveMeridian$, that is $y= h(y) = \flow(y,\delta(y))$ for all $y\in \curveMeridian$, it follows that the value of $\delta(y)$ a multiple of the period $\theta=1$.
Thus for every $y\in\curveMeridian$ there exists $n_y\in\ZZZ$ such that $\delta(y)= n_y$.
But $\delta$ is continuous, whence the mapping $y\mapsto n_y = \delta(y)$ is a continuous function $\curveMeridian \to \ZZZ$.
Therefore this function is constant, i.e. $\delta|_{\curveMeridian} = n$ for some $n\in\ZZZ$.

\smallskip

We should prove that $\eta$ has the desired properties.

\smallskip

{\bf Step 1.} {\em $\eta$ is a homomorphism.}

Let $h_i = \flow_{\delta_{i}} \in \kerjo$ for $i=0,1$ such that $\delta_{i}|_{\curveMeridian} = n_i$ for certain $n_i\in\ZZZ$.
Define the function $\delta = \delta_1 \circ h_0 + \delta_0$.
Since $h_0$ is fixed on $\curveMeridian$, we have that for each $z\in\curveMeridian$
\[ 
\delta(z) \ =  \
\delta_1 \circ h_0(z) + \delta_0(z)= 
\delta_1 (z) + \delta_0(z) = n_1 + n_0.
\]
Moreover, by \cite[Eq.~(8)]{Maksymenko:TA:2003},
\[ h_1 \circ h_0 = \flow_{\delta},\]
whence
\[
\eta(h_1 \circ h_0) = 
\delta|_{\curveMeridian} = n_1 + n_0 = \delta_1 |_{\curveMeridian} + \delta_0|_{\curveMeridian} =  \eta(h_1) + \eta(h_0),
\]
and so $\eta$ is a homomorphism.

\smallskip

{\bf Step 2.} {\em $\eta$ is surjective.}

It suffices to construct $g\in\kerjo$ with $\eta(g)=-1$.
The choice of $-1$ will simplify further exposition, however we could also construct $g$ satisfying $\eta(g)=+1$.

Let $J_{\eps}$ be the same as in Eq.~\eqref{equ:relation_flowMer_flowHam}, and $\beta: J_{\eps}\to [0,1]$ be a $C^{\infty}$-function such that 
\[
\beta(z) = 
\begin{cases}
-1, & |z|< \eps/3,\\
0 & |z|> 2\eps/3.
\end{cases}
\]
Define another function $\sigma: \Torus \to \RRR$ by
\[
\sigma(x,y) = 
\begin{cases}
\beta(x),  & x\in J_{\eps}, \\
0,         & x\in S^1\setminus J_{\eps},
\end{cases}
\]
and let $g = \flow_{\sigma}$, so $g$ is a map $\Torus\to\Torus$ defined by
\[ g(x,y) = \flow(x,y,\sigma(x,y)). \]
Since $\sigma=0$ outside $J_{\eps}\times S^1$, it follows from  Eq.~\eqref{equ:relation_flowMer_flowHam} that $g = \flowMeridian_{\sigma}$, i.e.
\begin{equation}\label{equ:gen_pi0SfC_via_flowM}
g(x,y) = \flowMeridian(x,y,\sigma(x,y)).
\end{equation}
Moreover, as periods of all points with respect to $\flowMeridian$ are equal to $1$, we can also write $g = \flowMeridian_{\sigma+n}$ for any $n\in\ZZZ$, i.e.
\[ g(x,y) = \flowMeridian(x,y,\sigma(x,y)+n). \]
Thus
\[ g = \flow_{\sigma} = \flowMeridian_{\sigma} = \flowMeridian_{\sigma+n} \]
for all $n\in\ZZZ$.
The following properties of $g$ can easily be verified:
\begin{itemize}
\item[(i)]
$g$ is fixed on $J_{\eps/3}\times S^1$ and outsize $J_{\eps}\times S^1$;

\item[(ii)]
$g \in \StabIdf$ by Lemma~\ref{lm:shift_functions}, since $g = \flow_{\sigma}$;

\item[(iii)]
the following family of maps $\mathbf{G}_t = \flowMeridian_{t(\sigma+1)}$, $t\in I$, is an isotopy between 
\begin{align}\label{equ:isot_g_id}
\mathbf{G}_0 &= \flowMeridian_{0} = \id_{\Torus}, &
\mathbf{G}_1 &= \flowMeridian_{\sigma+1} = g,
\end{align}
see Figure~\ref{fig:g_and_Gt}.
This isotopy is fixed on $\curveMeridian$, as $t(\sigma+1) = 0$ on $\curveMeridian$.
Hence $g\in\DTC$ as well.
\end{itemize}
Thus by (ii) and (iii) $g\in \Stabf \cap \DTC = \kerjo$.
It remains to note that $\eta(g) = \sigma|_{C} = -1$, and so $\eta$ is surjective.

\begin{figure}[h]
\center{\includegraphics[width=6cm]{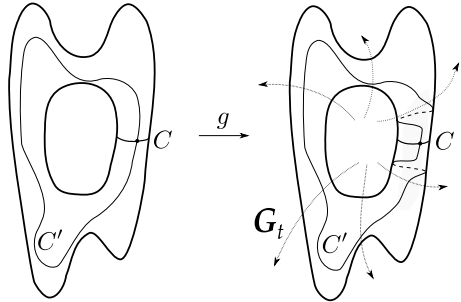}}
\caption{Diffeomorphism $g$ and the isotopy $\mathbf{G}_t$}
\label{fig:g_and_Gt}
\end{figure}

{\bf Step 3.} {$\ker \eta = \SidfC$.}

Suppose $h=\flow_{\delta}\in\ker\eta$, so $\eta(h) = \delta|_{\curveMeridian} =0$.
Then by Lemma~\ref{lm:shift_functions} an isotopy between $h$ and $\id_{\Torus}$ can be given by $g_t = \flow_{t\delta}$, $t\in[0,1]$.
Notice that $t\delta|_{\curveMeridian} =0$ as well, whence $\{g_t\}$ is also fixed on $\curveMeridian$.
Therefore $h\in \SidfC$.

Conversely, let $h \in \SidfC$, so $h$ is isotopic to $\id_{\Torus}$ in $\Sidf$ via an isotopy $\{h_t\}_{t\in[0,1]}$ fixed on $\curveMeridian$ and such $h_0 = \id_{\Torus}$ and $h_1 = h$.
Then $h_t = \flow_{\delta_t}$, $t\in[0,1]$, for some smooth function $\delta_t:\Torus\to\RRR$.
Since $\curveMeridian$ is a non-fixed trajectory of $\flow$, it follows from~\cite[Theorem~25]{Maksymenko:TA:2003}, that the values of $\delta_t$ on $\curveMeridian$ continuously depend on $t$.
But each $\delta_t$ takes a constant integer value on $\curveMeridian$, and $\id_{\Torus} = \flow_{0}$, whence 
\[
\eta(h) = \delta_1|_{\curveMeridian} =  \delta_t|_{\curveMeridian} = \delta_0|_{\curveMeridian} = 0,
\]
that is $h\in\ker\eta$.
\end{proof}

\subsection{Inverse of boundary isomorphism $\partial_{\curveMeridian}^{-1}: \pi_0\SfC \longrightarrow \pi_1(\DTC,\SfC)$}
Thus we have that both $\ker\jZ$ and $\subgroupMeridian$ are isomorphic to $\ZZZ$.
By Lemma~\ref{lm:ker_j0} $\ker\jZ$ is generated by the homotopy class of the diffeomorphism 
\[ g = \flow_{\sigma} = \flowMeridian_{\sigma} \in \Stabf \cap \DiffId(\Torus,\curveMeridian) \subset \SfC\]
defined by Eq.~\eqref{equ:gen_pi0SfC_via_flowM} and satisfying $\eta(g)=-1$.

On the other hand, $q(\subgroupMeridian)$ is generated by the homotopy class of the following map:
\begin{align}\label{equ:M_as_a_map_of_triples}
& \qhom(\flowMeridian): (I, \partial I, 0) \longrightarrow (\DT, \Sf, \id_{\Torus}),
&
\qhom(\flowMeridian)(t)& = \flowMeridian_{t}
\end{align}
Therefore in order to complete Proposition~\ref{pr:j0_properties} it suffices to establish the following lemma. 

\begin{lemma}\label{lm:iso_kerjZ_M}
$\jO \circ \partial_{\curveMeridian}^{-1}[g] = [\qhom(\flowMeridian)]$.
Hence $\jO \circ \partial_{\curveMeridian}$ isomorphically maps $\ker\jZ$ onto $\qhom(\subgroupMeridian)$.
\end{lemma}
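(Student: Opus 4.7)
The plan is to produce an explicit representative of $\partial_{\curveMeridian}^{-1}[g]$, push it into $\pi_1(\DT,\Sf)$ under $\jO$, and construct an explicit homotopy of triples from it to $\qhom(\flowMeridian)$. The first step is essentially for free: the isotopy $\mathbf{G}_t = \flowMeridian_{t(\sigma+1)}$ constructed in Eq.~\eqref{equ:isot_g_id} starts at $\id_{\Torus}$, ends at $g$, and is pointwise fixed on $\curveMeridian$ because $\sigma+1$ vanishes on $\curveMeridian$, so $\mathbf{G}_t\in\DTC$ for all $t$. Since $g\in\SfC$, the family $\mathbf{G}$ represents an element of $\pi_1(\DTC,\SfC)$ and the boundary map reads $\partial_{\curveMeridian}[\mathbf{G}] = [g]$, so $\partial_{\curveMeridian}^{-1}[g]=[\mathbf{G}]$.

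Next, I will rewrite $\mathbf{G}_t$ in a form that makes the relation to $\flowMeridian_t$ transparent. For any smooth $\alpha:\Torus\to\RRR$ and any constant $c\in\RRR$, the flow property of $\flowMeridian$ yields $\flowMeridian_c\circ\flowMeridian_\alpha = \flowMeridian_{\alpha+c}$ (the constant function $c$ is trivially invariant under $\flowMeridian_\alpha$). Applied with $c=t$ and $\alpha=t\sigma$, this gives $\mathbf{G}_t = \flowMeridian_t\circ\flowMeridian_{t\sigma}$, which motivates the candidate homotopy
\[
H:I\times I\longrightarrow\DT, \qquad H(s,t) = \flowMeridian_t\circ\flowMeridian_{st\sigma}.
\]
One reads off immediately that $H(0,t)=\flowMeridian_t = \qhom(\flowMeridian)(t)$, $H(1,t)=\mathbf{G}_t$, and $H(s,0)=\id_{\Torus}$ for every $s$.

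The critical step is verifying the endpoint condition $H(s,1)\in\Sf$ for all $s\in I$. Compute $H(s,1) = \flowMeridian_1\circ\flowMeridian_{s\sigma} = \flowMeridian_{s\sigma}$. Since $\supp\sigma\subset J_{\eps}\times S^1$, where $\flowMeridian$ and $\flow$ coincide by Eq.~\eqref{equ:relation_flowMer_flowHam}, while $s\sigma\equiv 0$ off $\supp\sigma$, we obtain $\flowMeridian_{s\sigma}=\flow_{s\sigma}$ on all of $\Torus$. By Lemma~\ref{lm:shift_functions} this diffeomorphism lies in $\Sidf\subset\Sf$, so indeed $H(s,1)\in\Sf$ throughout the deformation. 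Hence $H$ is a legitimate homotopy of maps of triples $(I,\partial I,0)\to(\DT,\Sf,\id_{\Torus})$, showing $\jO[\mathbf{G}] = [\qhom(\flowMeridian)]$ in $\pi_1(\DT,\Sf)$, and therefore $\jO\circ\partial_{\curveMeridian}^{-1}[g]=[\qhom(\flowMeridian)]$.

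The second assertion is then formal: by Lemma~\ref{lm:ker_j0} the group $\ker\jZ$ is the infinite cyclic group generated by $[g]$, and $\qhom(\subgroupMeridian)$ is the infinite cyclic group generated by $[\qhom(\flowMeridian)]$, so $\jO\circ\partial_{\curveMeridian}^{-1}$ carries a generator to a generator and is thus an isomorphism. The main obstacle of the argument is precisely arranging the track $s\mapsto H(s,1)$ to stay in $\Sf$; this is possible only because $\sigma$ was built with support in $J_{\eps}\times S^1$, the exact region where $\flowMeridian$ agrees with the $f$-preserving flow $\flow$, so that $\flowMeridian_{s\sigma}$ can be reinterpreted as the $f$-preserving shift $\flow_{s\sigma}$.
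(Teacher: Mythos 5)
Your proof is correct and follows essentially the same route as the paper: you use the same isotopy $\mathbf{G}_t=\flowMeridian_{t(\sigma+1)}$ to represent $\partial_{\curveMeridian}^{-1}[g]$, and your homotopy $H(s,t)=\flowMeridian_t\circ\flowMeridian_{st\sigma}$ equals the paper's $\mathbf{H}(t,s)=\flowMeridian_{t(s\sigma+1)}$ by the composition rule for shift functions, with the same key observation that $H(s,1)=\flowMeridian_{s\sigma}=\flow_{s\sigma}\in\Sf$ because $\supp\sigma\subset J_{\eps}\times S^1$. The only point treated more explicitly in the paper is the verification that every $H(s,t)$ is actually a diffeomorphism (via the lemma on $\flow_{t\alpha}$), which your composed form also yields immediately.
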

\begin{proof}
Recall that the boundary homomorphism $\partial_{\curveMeridian}:\pi_1(\DTC,\SfC) \longrightarrow \pi_0\SfC$ is defined as follows: if $\alpha\in\pi_1(\DTC,\SfC)$ and  $\omega: (I,\partial I, 0) \to (\DTC,\SfC,\id_{\Torus})$ is a representative of $\alpha$, then
\[ \partial_{\curveMeridian}(\alpha)  =  [\omega(1)] \in\pi_0\SfC.\]
Now let $\mathbf{G}_t = \flowMeridian_{t(\sigma+1)}$ be an isotopy between $\mathbf{G}(0)=\id_{\Torus}$ and $\mathbf{G}(1)=g$ fixed on $\curveMeridian$, see Eq.~\eqref{equ:isot_g_id}.
Regard it as a map of triples $\mathbf{G}: (I, \partial I, 0) \longrightarrow (\DTC, \SfC, \id_{\Torus})$.

Then $\partial([\mathbf{G}]) = [\mathbf{G}(1)] = [g]$, and so \[\partial_{\curveMeridian}^{-1}[g] = [\mathbf{G}].\]
As $\partial_{\curveMeridian}$ is an \textit{isomorphism}, $\partial_{\curveMeridian}^{-1}[g]$ \textit{does not depend} on a particular choice of such an isotopy $\mathbf{G}$.
Furthermore, $\jO\circ \partial_{\curveMeridian}^{-1}[g]$ is a homotopy class of $\mathbf{G}$ regarded as a map
\begin{align}\label{equ:G_as_a_map_of_triples}
\mathbf{G}:& (I, \partial I, 0) \longrightarrow (\DT, \Sf, \id_{\Torus}),
&
\mathbf{G}(t)& = \flowMeridian_{t(\sigma+1)}.
\end{align}
Therefore it remains to show that $[\mathbf{G}] = [\qhom(\flowMeridian)]$, that is the maps~\eqref{equ:M_as_a_map_of_triples} and~\eqref{equ:G_as_a_map_of_triples} are homotopic as maps of triples.

In fact the homotopy between them can be defined as follows:
\begin{align*}
&\mathbf{H}: (I, \partial I, 0) \times I \longrightarrow (\DT, \Sf, \id_{\Torus}),
&
\mathbf{H}(t,s) &= \flowMeridian_{t(s\sigma+1)}.
\end{align*}

1) First we verify that \textit{$\mathbf{H}(t,s)$ is a diffeomorphism for all $t,s\in I$}. 
As $g = \flowMeridian_{\sigma}$ is a diffeomorhism, it follows from Lemma~\ref{lm:shift_functions} that $\flowMeridian_{s\sigma} = \flowMeridian_{s\sigma+1}$ is also a diffeomorphism for all $s\in I$.
But then by the same lemma $\mathbf{H}(t,s) = \flowMeridian_{t(s\sigma+1)}$ is a diffeomorphism for all $t,s\in I$ as well.

2) Now let us show that $\mathbf{H}$ is a homotopy of maps of triples.
Indeed, for each $s\in I$ we have that 
$\mathbf{H}_{0,s} = \flowMeridian_{0} = \id_{\Torus}$, and  $\mathbf{H}_{1,s} = \flowMeridian_{s\sigma+1} = \flowMeridian_{s\sigma} \in \Sf$.

3) Finally, $\mathbf{H}_{t,0} = \flowMeridian_{t} = q(\flowMeridian)(t)$, and $\mathbf{H}_{t,1} = \flowMeridian_{t(\sigma+1)} = \mathbf{G}(t)$ for all $t\in I$.
Thus $\mathbf{H}$ is a homotopy between $\qhom(\flowMeridian)$ and $\mathbf{G}$.
Lemma~\ref{lm:iso_kerjZ_M} and Proposition~\ref{pr:j0_properties} are completed.
\end{proof}


\def\cprime{$'$}
\providecommand{\bysame}{\leavevmode\hbox to3em{\hrulefill}\thinspace}
\providecommand{\MR}{\relax\ifhmode\unskip\space\fi MR }
\providecommand{\MRhref}[2]{%
  \href{http://www.ams.org/mathscinet-getitem?mr=#1}{#2}
}
\providecommand{\href}[2]{#2}

%
%
%
%
\end{document}